\newtheorem{theorem}{Theorem}[section]
\newtheorem{lemma}{Lemma}[section]
\newtheorem{remark}{Remark}[section]
\newcommand\restr[2]{{
		\left.\kern-\nulldelimiterspace 
		#1 
		\vphantom{\big|} 
		\right|_{#2} 
}}
\begin{document}

\title[Quasilinear Schr\"odinger equation with singular nonlinearity]{About existence and regularity of positive solutions for a Quasilinear Schr\"odinger equation with singular nonlinearity}

\author{\bf\large Ricardo Lima Alves and Mariana Reis}\footnote{Ricardo Lima Alves acknowledges the support of CNPq/Brazil } \hspace{2mm}
 

\maketitle
\begin{center}
	{\bf\small Abstract}
	
	\vspace{3mm} \hspace{.05in}\parbox{4.5in} {{\small This paper deals with the existence of positive solution for the singular quasilinear Schr\"odinger equation
			$-\Delta u -\Delta (u^{2})u=h(x) u^{-\gamma} + f(x,u)~\mbox{in} ~ \Omega,$
			where $\gamma > 1$, $\Omega \subset \mathbb{R}^{N}, (N\geq 3)$ is a bounded smooth domain, $0<h\in L^{1}(\Omega)$, $f$ is a measurable function that can change signal and can be sublinear or has critical growth. Inspired by Sun \cite{Y} we derive a compatible condition on the couple $(h(x),\gamma)$, which is optimal for the existence of $H_{0}^{1}$-solution for this problem. } }
\end{center}

\noindent
{\it \footnotesize 2010 Mathematics Subject Classification}. {\scriptsize 35J62, 35J20, 55J75}.\\
{\it \footnotesize Key words}. {\scriptsize Strong singularity, Variational methods, Regularity.}

%
%
%
\section{\bf Introduction}
\def\theequation{1.\arabic{equation}}\makeatother
\setcounter{equation}{0}

In this article we are concerned with the existence of solution for the following quasilinear Schr\"odinger equation 
\begin{equation*}
(P)\left\{
\begin{array}{l}
-\Delta u -\Delta (u^{2})u=h(x) u^{-\gamma} + f(x,u)~\mbox{in} ~ \Omega,\\
u> 0~\mbox{in}~ \Omega,~~  u(x)=0~\mbox{on}~\partial \Omega,
\end{array}
\right.
\end{equation*}
where $1<\gamma, \Omega \subset \mathbb{R}^{N} (N\geq 3)$ is a bounded smooth domain, $0<h\in L^{1}(\Omega)$, $f:\Omega \times \mathbb{R}\longrightarrow \mathbb{R}$ is a measurable function and satisfies one of the following conditions
\begin{itemize}
	\item[$(f)_{1}$] $f(x,s)=b(x)s^{p}$ with $p\in (0,1), b\in L^{\infty}(\Omega)$ and $b^{+}\not\equiv 0$ in $\Omega$,
	\item[$(f)_{2}$] $f(x,s)=-b(x)s^{22^{\ast}-1}$ with $0\leq b\in L^{\infty}(\Omega)$.
\end{itemize}

Recently, some papers have worked with equation of the form
\begin{equation}\label{2}
-\Delta u -\Delta (u^{2})u=f(x,u)~\mbox{in} ~ \Omega,
\end{equation}
where $\Omega \subset \mathbb{R}^{N}$ is a bounded smooth domain and $f$ is non-singular. See for example \cite{AGS, LLL, LP, CRK} and its references, where the authors used variational methods to prove the existence of solution. In these works the nonlinearity is non-singular and therefore the functional energy associated to the problem has a good regularity to use the usual techniques for functional of class $C^{1}$.

When $f$ is singular, problems of type (\ref{2}) was studied by Do \'O-Moameni \cite{DM}, Liu-Liu-Zhao \cite{JDP} and Wang \cite{W}. In \cite{DM} the authors studied the problem
\begin{equation}\label{3}
-\Delta u -\frac{1}{2}\Delta (u^{2})u=\lambda |u|^{2}u-u-u^{-\gamma}~\mbox{in} ~ \Omega, u>0~\mbox{in} ~ \Omega~,
\end{equation}
where $\Omega$ is a ball in $\mathbb{R}^{N}$ centered at the origin, $0<\gamma<1$ and $N\geq 2$. They show, using the Nehari manifold method, that problem (\ref{3}) has a radially symmetric solution $u\in H_{0}^{1}(\Omega)$ for all $\lambda \in I$, where $I$ is a open and bounded interval. 

In 2016 Liu-Liu-Zhao in \cite{JDP} considered the problem
\begin{equation}\label{4}
-\Delta_{s} u -\frac{s}{2^{s-1}}\Delta (u^{2})u=h(x)u^{-\gamma}+\lambda u^{p}~\mbox{in} ~ \Omega, u>0~\mbox{in} ~ \Omega~,
\end{equation}
where $\Delta_{s}$ is the $s$-Laplacian operator, $s>1$,$\gamma>0$,  $\Omega\subset \mathbb{R}^{N}(N\geq 3)$ is a bounded smooth domain, $2s<p+1<\infty$ and $h(x) \geq 0$ is a nontrivial measurable function satisfying the following condition: there exists a function $\phi_{0}\geq 0$ in $C_{0}^{1}(\overline{\Omega})$ and $q>N$ such that $h(x)\phi_{0}^{-\gamma}\in L^{q}(\Omega)$. Combining the sub and supersolution method, truncation argument and variational methods, they proved the existence of $\lambda_{\ast}>0$ such that the problem (\ref{4}) has at least two solutions for $\lambda \in (0,\lambda_{\ast})$.

Recently Wang in \cite{W} proved the existence and uniqueness of solution to the following quasilinear Schr\"odinger equation
\begin{equation}
-\Delta u -\Delta (u^{2})u=h(x) u^{-\gamma}-u^{p-1}~\mbox{in} ~ \Omega, u>0~\mbox{in} ~ \Omega~,
\end{equation}
where $\Omega\subset \mathbb{R}^{N}(N\geq 3)$ is a bounded smooth domain, $\gamma \in (0,1), p\in [2,22^{\ast}]$ and $0<h\in L^{\frac{22^{\ast}}{22^{\ast}-1+\gamma}}(\Omega)$. The author used global minimization arguments to prove the existence of solution. Here after the use of variable change developed in Colin-Jeanjean \cite{CJ} the functional associated to the dual problem is well defined in $H^{1}_{0}(\Omega)$ and continuous.

Before stating our results we would like to cite here the work of Sun \cite{Y}. In this work the following problem was considered 
\begin{equation}\label{1}
-\Delta u=h(x)u^{-\gamma}+b(x)u^{p}~\mbox{in}~\Omega, ~u=0~\mbox{on}~\partial \Omega,
\end{equation}
where $\Omega\subset \mathbb{R}^{N}$ is a bounded smooth domain, $b\in L^{\infty}(\Omega)$ is a non-negative function, $0<p<1$, $\gamma > 1$ and $0<h\in L^{1}(\Omega)$. The author has proved, using variational methods, that the existence of positive solution in $H^{1}_{0}(\Omega)$ of the problem (\ref{1}) is related to a compatibility hypothesis between on the couple $(h(x),\gamma)$, more precisely it has been proved that the problem $(\ref{1})$ has a solution in $H^{1}_{0}(\Omega)$ if and only if there is $v_{0}\in H^{1}_{0}(\Omega)$ such that
\begin{equation}\label{C}
\int_{\Omega} h(x)|v_{0}|^{1-\gamma}<\infty.
\end{equation} 

The main difficulty there was because of the strong silgularity that causes a serious loss of regularity of the functional energy associated with which it is not continuous. In order to deal with these difficulty she worked with appropriate constrainsts sets to restore the integrability of singular term. Moreover, it was essential in its approach that nonlinearity was homogeneous.

Motivated by \cite{Y} a natural question arises: the hypothesis of compatibility between on the couple $(h(x),\gamma)$ given by (\ref{C}) remains necessary and sufficient for the existence of solution for our class of problems $(P)$? In this paper we will give a positive answer to this question. Also requesting more regularity in the function $h$ we prove that the solution have $C^{1,\alpha}$ regularity and as a consequence of this the solution is unique.

Our main results are 

\begin{theorem}\label{T1}
	Assume that $(f)_{1}$ is satisfied. Then: 
	
	\begin{itemize}
		\item[a)] the problem $(P)$ admits an solution $u\in H_{0}^{1}(\Omega)$ if and only if there exists $v_{0}\in H_{0}^{1}(\Omega)$ such that (\ref{C}) is satisfied.
		
		\item[b)] if $b\geq 0$ and there exist constants $c>0$ and $\beta \in (0,1)$ such that
		\begin{equation}\label{D}
		h(x)\leq c d^{\gamma-\beta}(x,\partial \Omega), \forall x\in \Omega,
		\end{equation}
		then the solution $u$ given in $a)$ belongs to $C^{1,\alpha}(\overline{\Omega})$ for some $\alpha \in (0,1)$. In particular the problem $(P)$ has a unique solution in $H_{0}^{1}(\Omega)$.
	\end{itemize}
\end{theorem}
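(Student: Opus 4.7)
The plan is to reduce the quasilinear problem to a semilinear one via the Colin--Jeanjean change of variable and then adapt Sun's constrained minimization from \cite{Y} to the transformed equation. Let $F\in C^{2}(\mathbb{R})$ be the odd function with $F'(t)=\sqrt{1+2t^{2}}$ and set $G=F^{-1}$. A direct computation shows that if $u\ge 0$ solves $(P)$ then $v=F(u)\in H_{0}^{1}(\Omega)$ weakly satisfies
\[
-\Delta v=\frac{h(x)G(v)^{-\gamma}+b(x)G(v)^{p}}{\sqrt{1+2G(v)^{2}}}\quad\text{in }\Omega,
\]
whose dual energy is
\[
\widetilde{J}(v)=\tfrac{1}{2}\!\int_{\Omega}|\nabla v|^{2}+\tfrac{1}{\gamma-1}\!\int_{\Omega}hG(v)^{1-\gamma}-\tfrac{1}{p+1}\!\int_{\Omega}bG(v)^{p+1}.
\]
The elementary bounds $0\le G(v)\le v$ for $v\ge 0$ and $G(v)\sim v$ as $v\to 0^{+}$ are used throughout. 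The necessity in (a) is then essentially immediate: testing the weak formulation of $(P)$ with the truncation $T_{k}(u)=\min(u,k)$ and letting $k\to\infty$ by monotone convergence yields $\int_{\Omega}hu^{1-\gamma}<\infty$, so $v_{0}=u$ satisfies (C).

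For the sufficiency, condition (C) makes the admissible set
\[
\mathcal{N}:=\Big\{v\in H_{0}^{1}(\Omega):v\ge 0,\ \int_{\Omega}h\,G(v)^{1-\gamma}<\infty\Big\}
\]
non-empty, because $F(\min(v_{0},M))\in\mathcal{N}$ for any $M>0$. Since the singular term in $\widetilde{J}$ is nonnegative and $|\int bG(v)^{p+1}|\le C\|v\|_{H_{0}^{1}}^{p+1}$ with $p+1<2$, $\widetilde{J}$ is coercive and bounded below on $\mathcal{N}$; weak lower semicontinuity follows from Fatou applied to the nonnegative singular term and from the compact embedding $H_{0}^{1}\hookrightarrow L^{p+1}$, so a minimizer $v_{*}\in\mathcal{N}$ exists. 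Note that $v_{*}\in\mathcal{N}$ together with $h>0$ a.e.\ automatically forces $v_{*}>0$ a.e. To extract the Euler--Lagrange equation despite the non-G\^ateaux-differentiability of $\widetilde{J}$, I would follow Sun's argument: use admissible perturbations that stay in $\mathcal{N}$ (for example $v_{*}+t\varphi$ with $\varphi\ge 0$ compactly supported, or scalings $v_{*}(1+t\psi)$), derive one-sided variational inequalities from the minimality, and combine them as $t\downarrow 0$ via Fatou on the singular side and dominated convergence on the regular side to obtain the weak form of the transformed equation. Reverting via $u_{*}=G(v_{*})$ yields a solution of $(P)$ in $H_{0}^{1}(\Omega)$.

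For part (b), with $b\ge 0$ and (D) in force, a standard Moser/Stampacchia iteration on the transformed equation gives $v_{*}\in L^{\infty}(\Omega)$, hence $u_{*}\in L^{\infty}(\Omega)$. Weak comparison of the superharmonic $v_{*}$ (the right-hand side of the transformed equation is nonnegative) with the torsion function of $\Omega$ yields $v_{*}\ge c_{0}\,d(\cdot,\partial\Omega)$, and hence $u_{*}\ge c_{1}\,d(\cdot,\partial\Omega)$ since $G(v)\sim v$ near $0$. Substituting into (D) gives $h u_{*}^{-\gamma}\le C\,d(\cdot,\partial\Omega)^{-\beta}$; a boundary-weighted variant of the DiBenedetto--Lieberman regularity estimate for the quasilinear operator $u\mapsto -\Delta u-\Delta(u^{2})u$ then upgrades $u_{*}$ to $C^{1,\alpha}(\overline{\Omega})$. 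Uniqueness of the $C^{1,\alpha}$ solution follows from a D\'iaz--Saa type comparison exploiting the strict monotonicity of $s\mapsto h(x)s^{-\gamma}+b(x)s^{p}$ against the quasilinear energy.

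The main obstacle is the sufficiency in (a): because $\widetilde{J}$ is not G\^ateaux differentiable on the full set $\mathcal{N}$, the variational identity must be coaxed out of carefully paired one-sided perturbations, and every step has to survive the nonlinear correspondence $u=G(v)$. In part (b), the delicate point is that the pointwise bound $hu_{*}^{-\gamma}\le Cd^{-\beta}$ only yields $L^{q}$-integrability for $q<1/\beta$, so the $C^{1,\alpha}$ upgrade requires boundary-adapted elliptic estimates rather than straight $W^{2,q}$ theory.
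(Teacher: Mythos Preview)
Your approach coincides with the paper's in the essential framework: both pass to the dual semilinear problem via the Colin--Jeanjean substitution, minimize the dual energy, and extract the Euler--Lagrange equation from one-sided perturbations. Two points of comparison and one gap are worth flagging.

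For part (a), you minimize $\widetilde{J}$ directly on the full admissible set $\mathcal{N}$, whereas the paper minimizes on the Nehari-type constraint $\mathcal{N}_1=\{v\in V_+:\|v\|^2-\int b\,G(v)^p G'(v)v\ge\int h\,G(v)^{-\gamma}G'(v)v\}$ and uses the fiber maps $\phi_v(t)=\widetilde{J}(tv)$ to show that the minimizer actually lands on the equality set $\mathcal{N}_2$. Your direct route is legitimate and in fact shorter: coercivity plus Fatou yields a minimizer $v_*\in\mathcal{N}$ with $v_*>0$ a.e., and the Nehari identity $\phi_{v_*}'(1)=0$ still follows simply because $tv_*\in\mathcal{N}$ for every $t>0$ and $v_*$ is a global minimizer over $\mathcal{N}$. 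That identity is exactly what is needed to run the Graham--Eagle argument (testing the one-sided inequality with $(v_*+\epsilon\varphi)^+$ for arbitrary $\varphi\in H_0^1$); your proposal gestures at ``scalings $v_*(1+t\psi)$'', but the ingredient that actually closes the loop is the scalar scaling $t\mapsto tv_*$, so make this explicit rather than leaving the derivation of the full weak equation vague.

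For part (b), the paper stays with the transformed \emph{semilinear} equation $-\Delta v_*=h\,G(v_*)^{-\gamma}G'(v_*)+b\,G(v_*)^{p}G'(v_*)$ throughout. A Brezis--Nirenberg lower bound gives $v_*\ge\epsilon\,d(\cdot,\partial\Omega)$, whence both right-hand-side terms are pointwise bounded by $C\phi_1^{-\delta}$ for some $\delta\in(0,1)$, and Hai's lemma (Lemma~\ref{Ht} in the paper) delivers $v_*\in C^{1,\alpha}(\overline\Omega)$ directly; uniqueness is then a comparison principle of Mohammed type applied to the map $s\mapsto \psi(x,s)/s$. Your detour back to the original quasilinear operator and appeal to a ``boundary-weighted DiBenedetto--Lieberman estimate'' is a genuine gap as written: since the right-hand side is only in $L^q$ for $q<1/\beta$, ordinary $W^{2,q}$ or $C^{1,\alpha}$ theory does not apply, and there is no off-the-shelf quasilinear result for data bounded by a negative power of the boundary distance that you can cite. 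The gap dissolves if you keep working with $-\Delta v_*$ and invoke Hai's lemma, then transfer regularity to $u_*=G(v_*)$ via $G\in C^\infty$.
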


\begin{theorem}\label{T2}
	If $(f)_{2}$ is satisfied, the problem $(P)$ admits an unique solution $ u\in H_{0}^{1}(\Omega)$ if and only if there exists $v_{0}\in H_{0}^{1}(\Omega)$ such that (\ref{C}) is satisfied.
	
\end{theorem}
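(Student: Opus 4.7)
My plan is to adapt the constrained-minimization strategy of Sun \cite{Y} to the present critical, quasilinear setting, using the Colin-Jeanjean change of variables \cite{CJ} as a bridge. Define $G\in C^\infty([0,\infty))$ by $G'(t)=\sqrt{1+2t^2}$, $G(0)=0$, and set $f:=G^{-1}$. If $u=f(v)$ with $v\in H_0^1(\Omega)$, then $(P)$ is formally equivalent to the semilinear problem
\begin{equation*}
-\Delta v=\frac{h(x)f(v)^{-\gamma}-b(x)f(v)^{22^{\ast}-1}}{\sqrt{1+2f(v)^2}}\text{ in }\Omega,\quad v=0\text{ on }\partial\Omega,
\end{equation*}
whose formal energy is
\begin{equation*}
J(v)=\tfrac12\int_\Omega|\nabla v|^2+\tfrac{1}{22^{\ast}}\int_\Omega b(x)f(v)^{22^{\ast}}+\tfrac{1}{\gamma-1}\int_\Omega h(x)f(v)^{1-\gamma}.
\end{equation*}
Since $\gamma>1$, $b\geq 0$, and $h>0$, every summand of $J$ is non-negative. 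Because $f(v)\sim v$ as $v\to 0^+$, the set $M:=\{v\in H_0^1(\Omega):v\geq 0,\ \int_\Omega h(x)f(v)^{1-\gamma}<\infty\}$ is non-empty precisely when \eqref{C} holds.

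The necessity of \eqref{C} is almost immediate: testing the weak form of $(P)$ with the solution $u$ itself yields $\int(1+2u^2)|\nabla u|^2+\int b u^{22^{\ast}}=\int h u^{1-\gamma}$, and the left-hand side is finite because $v=G(u)\in H_0^1\hookrightarrow L^{2^{\ast}}$ forces $u\in L^{22^{\ast}}$; thus $\int h u^{1-\gamma}<\infty$ and $v_0=u$ works. For sufficiency, $J$ is coercive on $M$ (clearly $J(v)\geq\tfrac12\|v\|_{H_0^1}^2$), so a minimizing sequence $(v_n)\subset M$ has a weakly convergent subsequence, $v_n\rightharpoonup\bar v$ in $H_0^1$ and $v_n\to\bar v$ a.e.\ along some further extraction. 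Weak lower semicontinuity of the Dirichlet integral combined with Fatou's lemma applied to the remaining non-negative terms---a step available here precisely because the critical summand enters $J$ with the favorable sign $b\geq 0$, ruling out the usual loss of compactness---shows that $\bar v\in M$ is a minimizer.

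The technical heart is the Euler-Lagrange derivation. First I would argue that $\bar v>0$ a.e.\ on $\{h>0\}$ (otherwise $J(\bar v)=+\infty$). Then, for $0\leq\varphi\in C_c^\infty(\Omega)$, the perturbation $\bar v+t\varphi$ lies in $M$ for every small $t>0$ (the singular integral decreases), and $J(\bar v+t\varphi)\geq J(\bar v)$; dividing by $t$ and letting $t\to 0^+$ with Fatou on the singular term and dominated convergence on the others yields a one-sided inequality, which, by density and the use of negative perturbations on the sets $\{\bar v>\varepsilon\}$, produces the full weak form. Undoing the change of variables via $u=f(\bar v)$ then delivers an $H_0^1$-solution of $(P)$. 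Uniqueness follows from a Diaz-Saa type comparison: testing the equations of two solutions $u_1,u_2$ with $(u_1^2-u_2^2)/u_1$ and $(u_1^2-u_2^2)/u_2$ respectively and subtracting, the monotonicity of the quasilinear operator combined with the strict decrease of $s\mapsto h(x)s^{-\gamma}-b(x)s^{22^{\ast}-1}$ in $s>0$ (ensured by $\gamma>1$ and $b\geq 0$) forces $u_1\equiv u_2$. The main obstacle throughout is this Euler-Lagrange step: the strong singularity $\gamma>1$ prevents any Gateaux differentiability of $J$, so only one-sided variations together with Fatou-type estimates are available, and one must carefully ensure that the perturbations remain admissible in $M$ while the extra composition $f=G^{-1}$ coming from the change of variables does not spoil the limiting identity.
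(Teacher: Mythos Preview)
Your overall strategy is correct and in fact somewhat leaner than the paper's. Both arguments pass through the Colin--Jeanjean change of variables and exploit that, under $(f)_2$ with $b\ge0$, every term in the dual energy is non-negative. The paper, however, does not minimize over your set $M$ (the effective domain of $J$) but over the Nehari-type constraint $\mathcal N_1=\{v\in V_+:\|v\|^2+\int b\,g(v)^{22^\ast-1}g'(v)v\ge\int h\,g(v)^{-\gamma}g'(v)v\}$, and then uses the fibre maps $t\mapsto\Phi(tv)$ to show that the minimum is attained on the smaller set $\mathcal N_2$ where equality holds. For the critical term the paper invokes Brezis--Lieb, whereas you correctly observe that Fatou alone already gives the needed lower bound since $b\ge0$; this is a genuine simplification. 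Note also that your minimizer $\bar v$ automatically lies in $\mathcal N_2$: the ray $\{t\bar v:t>0\}$ sits inside $M$ (by the equivalence in Lemma~\ref{L2}), so $t\mapsto J(t\bar v)$ has a critical point at $t=1$.

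Where your sketch is thinner than the paper is the Euler--Lagrange step and uniqueness. Your proposal ``negative perturbations on $\{\bar v>\varepsilon\}$ plus density'' does give equality for test functions compactly supported in $\{\bar v>\varepsilon\}$, but passing to the limit $\varepsilon\to0$ in the singular term for a general $\varphi\in H_0^1$ is delicate and not addressed. The paper avoids this by a Graham--Eagle argument: once $\bar v\in\mathcal N_2$ is known, one tests the one-sided inequality with $(\bar v+\epsilon\varphi)^+$ for arbitrary $\varphi\in H_0^1$, subtracts the identity coming from $\mathcal N_2$, and lets $\epsilon\to0$; this yields the full weak equation without any cut-off on $\bar v$. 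Since your minimizer also satisfies $\bar v\in\mathcal N_2$, you could equally well invoke this device. For uniqueness, your D\'iaz--Saa comparison on the original quasilinear problem requires $(u_1^2-u_2^2)/u_i\in H_0^1(\Omega)$, which is not immediate without extra regularity. The paper works instead on the dual problem: with $j(x,t)=-b(x)g(t)^{22^\ast-1}g'(t)+h(x)g(t)^{-\gamma}g'(t)$ strictly decreasing in $t>0$, testing the difference of the two dual equations with $v_1-v_2\in H_0^1$ gives $\|v_1-v_2\|^2=\int(j(\cdot,v_1)-j(\cdot,v_2))(v_1-v_2)\le0$, hence $v_1=v_2$ directly.
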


To prove our main result let us use the method of changing variables developed by Colin-Jeanjean \cite{CJ}. After this the functional associated with the dual problem is not homogeneous and this causes several difficulties. For example, the techniques used by previous work do not apply directly. To cover this difficulty we will make a  careful analysis of the fiber maps associated to the functional of the dual problem and will approach the problem in a new way to prove the existence of a solution to the problem $(P)$.

Now let us mention some contributions from our work. In this work we consider the most general potentials, for instance the potential $ b $ can change signal. The regularity of solution (and hence uniqueness) for problem with strong singularity has not been treated yet. Theorem \ref{T2} completes the study made by Wang in \cite{W} in the sense that we now consider the case $\gamma>1$, while \cite{W} consider the case $0<\gamma<1$. Moreover in our work we consider the more general potentials also.

This paper is organized as follows: In the next section we reformulate the problem $(P)$ into a new one which finds its natural setting in the Sobolev space $H_{0}^{1}(\Omega)$ and we will present some preliminary lemmas. In section $3$, we give the proof of Theorem \ref{T1} and in section $4$ the proof of Theorem \ref{T2}. The last section consists of an appendix to which we will study the problem $(P)$ with $b(x)\equiv \lambda b(x), \lambda \geq 0$ and prove that the solutions found in the Theorem \ref{T1} vary continuously with respect to $\lambda$ and the norm from $H_{0}^{1}(\Omega)$.

$Notation$. In the rest of the paper we make use of the following notation:
\begin{itemize}[leftmargin=*]  
\item $c,C$ denote positive constants, which may vary from line to line,
\item $H_{0}^{1}(\Omega)$ denote the Sobolev Space equipped with the norm $||u|| \!\!=\!\! \left(\displaystyle\int_{\Omega}|\nabla u|^{2}dx\right)^{2}$,
\item $L^{s}(\Omega)$ denotes the Lebesgue Space with the norms $||u||_{s}= \left(\displaystyle\int_{\Omega}|\nabla u|^{s}dx\right)^{1/s}$, for $1\leq p<\infty$ and $||u||_{\infty}=\inf \left\{C>0: |u(x)|\leq C~\mbox{a.s. in}~ \Omega\right\}$,
\item for each set $B\subset \mathbb{R}^{N}$ the characteristic function of $B$ is denoted by $\chi_{B}$.  
\end{itemize}

\section{Variational framework and Preliminary Lemmas}

In this section we provide preliminary results wich will be used to prove the existence of a solution of the problem $(P)$. By solutions we mean here weak solutions in $H_{0}^{1}(\Omega)$, that is $u \in H_{0}^{1}(\Omega)$ satisfying $u(x) > 0$, in $\Omega$ and
$$
\displaystyle \int_{\Omega} [(1+u^{2})\nabla u \nabla \varphi+2u\vert \nabla u\vert^{2}\varphi - h(x)u^{-\gamma} \varphi-f(x,u)\varphi]dx=0,
$$
for every $\varphi \in H^{1}_{0}(\Omega)$, which is formally the variational formulation of the following functional 
$J:D(J)\subset H_{0}^{1}(\Omega)\rightarrow \mathbb{R}$
$$J(u)=\frac{1}{2}\displaystyle\int_{\Omega} (1+2u^{2})|\nabla u|^{2}+\frac{1}{\gamma-1}\int_{\Omega} h(x)|u|^{1-\gamma}-\int_{\Omega} F(x,u),$$
where $D(J)=\left\{u\in H_{0}^{1}(\Omega): \displaystyle\int_{\Omega} h(x)|u|^{1-\gamma}<\infty \right\}$ if $D(J)\neq \emptyset$ and $F(x,s)=\displaystyle\int_{0}^{s}\!\!\!f(x,t)dt$.

However, this functional is not well-defined, because $\displaystyle\int_{\Omega} u^{2}|\nabla u|^{2}dx$ is not finite for all $u\in H^{1}_{0}(\Omega) $, hence it is difficult to apply variational methods directly. Firstly we use the method developed in \cite{CJ} introducing the unknown variable $v:=g^{-1}(u),$ where $g$ is defined by
$$g^{\prime}(t)=\frac{1}{\left(1+2|g(t)|^{2}\right)^{\frac{1}{2}}},~\forall t\in [0,\infty), ~ 
g(t)=-g(-t),~\forall t\in (-\infty,0].$$

It is easy to see that if $v$ is solution of
\begin{equation*} 
(P_{A})\left\{
\begin{array}{l}
-\Delta v=\left[ h(x) (g(v))^{-\gamma} + f(x,g(v))\right]g^{\prime}(v) ~\mbox{in} ~ \Omega,\\
v> 0~\mbox{in}~ \Omega,~~  v(x)=0 ~\mbox{on}~ \partial \Omega,
\end{array}
\right.
\end{equation*}
if and only if $u = g(v)$ is solution of $(P)$. We will call the problem $(P_{A})$ of dual problem to $(P)$.

The weak form of the equation $(P_A)$ is
$$
\int_{\Omega} \nabla v \nabla \phi dx=\int_{\Omega}  h(x) (g(v))^{-\gamma}g^{\prime}(v)\phi dx+\int_{\Omega} f(x,g(v))g^{\prime}(v)\phi dx,
$$
for every $\phi \in H_{0}^{1}(\Omega)$ and therefore $v$ is a critical point of functional
$$\Phi(v)=\frac{1}{2}\int_{\Omega} |\nabla v|^{2}+\frac{1}{\gamma-1}\int_{\Omega} h(x)|g(v)|^{1-\gamma}-\int_{\Omega} F(x,g(v)),$$
which is defined in $D(\Phi)=\left\{v\in H_{0}^{1}(\Omega): \displaystyle\int_{\Omega} h(x)|g(v)|^{1-\gamma}<\infty \right\}$ if $D(\Phi)\neq \emptyset$ and $F(x,s)=\displaystyle\int_{0}^{s}f(x,t)dt$.
Now, we list some properties of $g$, whose proofs can be found in Liu \cite{L}.
\begin{lemma}\label{L1} The function $g$ satisfies the following properties:
	\begin{itemize}
		\item[(1)] $g$ is uniquely defined, $C^{\infty}$ and invertible;
		\item[(2)] $g(0)=0$;
		\item[(3)] $0<g^{\prime}(t)\leq 1$ for all $t\in \mathbb{R}$;
		\item[(4)] $\frac{1}{2}g(t)\leq tg^{\prime}(t)\leq g(t)$ for all $t>0$;
		\item[(5)] $|g(t)|\leq |t|$ for all $t\in \mathbb{R}$;
		\item[(6)] $|g(t)|\leq K_{0}|t|^{\frac{1}{2}}$ for all $t\in \mathbb{R}$;
		\item[(7)] $(g(t))^{2}-g(t)g^{\prime}(t)t\geq 0$ for all $t\in \mathbb{R}$;
		\item[(8)] There exists a positive constant $C$ such that $|g(t)|\geq C|t|$ for $|t|\leq 1$ and $|g(t)|\geq C|t|^{\frac{1}{2}}$ for all $|t|>1$;
		\item[(9)] $g^{\prime \prime}(t)<0$ when $t>0$ and $g^{\prime \prime}(t)>0$ when $t<0$;
		\item[(10)] the functions $(g(t))^{1-\gamma}$ and $(g(t))^{-\gamma}$ are decreasing for all $t>0$;
      	\item[(11)] the function $(g(t))^{p}t^{-1}$ is decreasing for all $t>0$;
      	\item[(12)] $|g(t)g^{\prime}(t)|<1/ \sqrt[]{2}$ for all $t\in \mathbb{R}$.
      \end{itemize}
\end{lemma}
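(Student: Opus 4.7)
The plan is to treat $g$ as the unique solution of the initial value problem $g'(t)=1/\sqrt{1+2g(t)^{2}}$, $g(0)=0$ extended oddly. Since the right-hand side is smooth in $g$ and uniformly bounded, Picard--Lindelöf together with global continuation yields a unique $C^{\infty}$ solution on all of $\mathbb{R}$, establishing (1); the initial value supplies (2); and the explicit formula for $g'$ gives $0<g'(t)\leq 1$ with equality only at $t=0$, which is (3). Differentiating the ODE once more produces $g''(t)=-2g(t)g'(t)/(1+2g(t)^{2})^{3/2}$, whose sign is opposite to $g(t)$, which is (9). Property (5) follows by integrating $g'\leq 1$ from $0$ to $t$, and property (12) is immediate since $|g(t)g'(t)|=|g(t)|/\sqrt{1+2g(t)^{2}}<1/\sqrt{2}$, the bound coming from the fact that $x\mapsto x/\sqrt{1+2x^{2}}$ has supremum $1/\sqrt{2}$ on $\mathbb{R}$.

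For (4), I would introduce the auxiliary functions $\psi_{1}(t):=g(t)-tg'(t)$ and $\psi_{2}(t):=2tg'(t)-g(t)$ on $(0,\infty)$. One computes $\psi_{1}'(t)=-tg''(t)$, which is positive for $t>0$ by (9), and $\psi_{1}(0)=0$; hence $\psi_{1}\geq 0$, yielding the upper inequality $tg'(t)\leq g(t)$. A parallel computation for $\psi_{2}$, substituting the explicit expression for $g''$, gives $\psi_{2}'(t)\geq 0$ and $\psi_{2}(0)=0$, producing the lower inequality. Property (7) follows at once by factoring: $g(t)^{2}-tg(t)g'(t)=g(t)\psi_{1}(t)\geq 0$.

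The asymptotic bounds (6) and (8) come from separating variables in the ODE to obtain the implicit formula $t=\int_{0}^{g(t)}\sqrt{1+2s^{2}}\,ds$. For $g(t)$ large, the integral behaves like $g(t)^{2}/\sqrt{2}$, so $g(t)\sim 2^{-1/4}\,t^{1/2}$; combining this with the bound from (5) on the compact set $|t|\leq 1$ supplies a uniform constant $K_{0}$ in (6) and the $|t|>1$ part of (8). The lower bound $|g(t)|\geq C|t|$ for $|t|\leq 1$ is obtained from $g'(0)=1$ and continuity: $g'(t)\geq C$ on $[-1,1]$, so $g(t)\geq Ct$ by integration.

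The monotonicity statements (10) and (11) reduce to derivative sign computations. For (10), $\tfrac{d}{dt}g(t)^{-\gamma}=-\gamma g(t)^{-\gamma-1}g'(t)<0$ for $t>0$, and likewise $\tfrac{d}{dt}g(t)^{1-\gamma}=(1-\gamma)g(t)^{-\gamma}g'(t)<0$ using $\gamma>1$. For (11), differentiating $g(t)^{p}/t$ produces a numerator $g(t)^{p-1}(ptg'(t)-g(t))$; by the upper inequality in (4), $ptg'(t)-g(t)\leq (p-1)g(t)$, which is non-positive for $p\leq 1$, giving the claim in the relevant range of exponents. The main obstacle is the careful asymptotic analysis underpinning (6) and (8); the rest of the list reduces to one-line verifications once the identity for $g''$ and the estimate (4) are in hand, which is why the result is borrowed verbatim from Liu \cite{L}.
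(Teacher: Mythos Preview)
Your treatment of (10) and (11) is essentially identical to the paper's: both differentiate and use $\gamma>1$ for (10), and both combine the product/quotient rule with the upper estimate $tg'(t)\le g(t)$ from (4) together with $p<1$ for (11). The paper proves \emph{only} these two items and defers (1)--(9) and (12) to Liu~\cite{L}, so on the part that the paper actually argues, you match it.

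Where you go beyond the paper there is one genuine gap. In your proof of the \emph{lower} inequality in (4) you claim that $\psi_{2}(t)=2tg'(t)-g(t)$ satisfies $\psi_{2}'(t)\ge 0$. This is false. Using $g''=-2g(g')^{4}$ one finds
\[
\psi_{2}'(t)=g'(t)+2tg''(t)=g'(t)\bigl[1-4tg(t)(g'(t))^{3}\bigr],
\]
and the bracket becomes negative for large $t$: for instance at the point where $g=2$ one computes (via $t=\int_{0}^{g}\sqrt{1+2s^{2}}\,ds$) that $t\approx 3.62$ and $g'=1/3$, giving $4tg(g')^{3}\approx 1.07>1$. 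In fact $\psi_{2}(t)\to 0$ as $t\to\infty$, so $\psi_{2}$ cannot be monotone. A correct route is to use the implicit relation $t=\int_{0}^{g(t)}\sqrt{1+2s^{2}}\,ds$ directly: setting $F(y)=2\int_{0}^{y}\sqrt{1+2s^{2}}\,ds-y\sqrt{1+2y^{2}}$ one has $F(0)=0$ and $F'(y)=(1+2y^{2})^{-1/2}>0$, hence $F(g(t))>0$ for $t>0$; since $F(g(t))=\psi_{2}(t)/g'(t)$, this yields $\psi_{2}(t)>0$ and thus $\tfrac{1}{2}g(t)\le tg'(t)$. The remainder of your sketch for (1)--(3), (5)--(9), (12) is sound.
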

\begin{proof}
	We only prove $(10),(11)$. Since  $g(t),g^{\prime}(t)>0 $ for each $t>0$ and $\gamma>1$ follows that 
	$$\left[(g(t))^{1-\gamma}\right]^{\prime}=(1-\gamma)(g(t))^{-\gamma}g^{\prime}(t) < 0, ~\forall t>0.$$
        Hence the function $g^{1-\gamma}:(0,\infty)\longrightarrow \mathbb{R}$ is decreasing. Similarly we have that the function $g^{-\gamma}:(0,\infty)\longrightarrow \mathbb{R}$ is decreasing. 
        
        Let us prove $(11)$. To do this, note that
       \begin{align*}
        \left[(g(t))^{p}t^{-1}\right]^{\prime}=&p(g(t))^{p-1}g^{\prime}(t)t^{-1}-(g(t))^{p}t^{-2}\\
        =&p(g(t))^{p-1}(g^{\prime}(t)t)t^{-2}-(g(t))^{p}t^{-2}\\ 
        <& t^{-2}\left[(g(t))^{p-1}g(t)-(g(t))^{p}\right]<0,
       \end{align*}
       where we use the item $(4)$ of this Lemma and $p<1$. Therefore the function $(g(t))^{p}t^{-1}$ is decreasing for all $t>0$.
    \end{proof}

The next lemma gives us a relation of duality between the compatibility hypothesis for the problems $(P)$ and $(P_{A})$.
\begin{lemma}\label{L2} Let $v>0$ in $\Omega$.  The following conditions are equivalent:
	\begin{itemize}
		\item[(a)] $\displaystyle\int_{\Omega} h(x)|v|^{1-\gamma}<\infty$;
		\item[(b)] $\displaystyle\int_{\Omega} h(x)(g(v))^{-\gamma}g^{\prime}(v)v<\infty$;
		\item[(c)] $\displaystyle\int_{\Omega} h(x)(g(v))^{1-\gamma}<\infty$.
	\end{itemize}
	
\end{lemma}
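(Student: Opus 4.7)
The plan is to prove (b) $\Leftrightarrow$ (c) directly from the asymptotic comparison between $g(v)$ and $vg'(v)$, and then to obtain (a) $\Leftrightarrow$ (c) by comparing $g(v)$ with $v$ on the two regimes $\{v \le 1\}$ and $\{v > 1\}$, exploiting that $h \in L^1(\Omega)$ to absorb the large-$v$ part.

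For (b) $\Leftrightarrow$ (c) I would use item (4) of Lemma \ref{L1}, which asserts $\tfrac{1}{2} g(v) \le v g'(v) \le g(v)$ for $v > 0$. Multiplying by $(g(v))^{-\gamma} > 0$ (since $v > 0$) gives, pointwise in $\Omega$,
$$
\tfrac{1}{2}(g(v))^{1-\gamma} \;\le\; (g(v))^{-\gamma} g'(v)\, v \;\le\; (g(v))^{1-\gamma}.
$$
Multiplying by $h(x) \ge 0$ and integrating yields the two-sided bound that proves (b) $\Leftrightarrow$ (c).

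The implication (c) $\Rightarrow$ (a) is immediate from item (5) of Lemma \ref{L1}: since $0 < g(v) \le v$ and $1-\gamma < 0$, we have $(g(v))^{1-\gamma} \ge v^{1-\gamma}$ everywhere, so $\int_\Omega h(x) v^{1-\gamma} \le \int_\Omega h(x)(g(v))^{1-\gamma} < \infty$.

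For the remaining (a) $\Rightarrow$ (c), I would split $\Omega = \{v \le 1\} \cup \{v > 1\}$ and apply item (8) of Lemma \ref{L1}. On $\{v \le 1\}$ we have $g(v) \ge Cv$, hence $(g(v))^{1-\gamma} \le C^{1-\gamma} v^{1-\gamma}$, so
$$
\int_{\{v \le 1\}} h(x)(g(v))^{1-\gamma} \,dx \;\le\; C^{1-\gamma} \int_\Omega h(x) v^{1-\gamma} \,dx < \infty
$$
by hypothesis (a). On $\{v > 1\}$ we have $g(v) \ge C v^{1/2}$, so $(g(v))^{1-\gamma} \le C^{1-\gamma} v^{(1-\gamma)/2} \le C^{1-\gamma}$ because $v > 1$ and $(1-\gamma)/2 < 0$; thus
$$
\int_{\{v > 1\}} h(x)(g(v))^{1-\gamma} \,dx \;\le\; C^{1-\gamma} \int_\Omega h(x)\, dx < \infty,
$$
since $h \in L^1(\Omega)$. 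Summing these two estimates gives (c). The argument is essentially routine; the only minor delicate point is remembering that the sign of $1-\gamma$ reverses the inequalities when raising to that power, so I would be careful to use the strict positivity $v > 0$ in $\Omega$ throughout so that all the powers involved are well-defined.
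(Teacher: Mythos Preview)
Your proof is correct and uses essentially the same ingredients as the paper: item~(4) of Lemma~\ref{L1} to relate (b) and (c), item~(5) for (c)~$\Rightarrow$~(a), and the splitting $\{v\le 1\}\cup\{v>1\}$ together with item~(8) and $h\in L^1(\Omega)$ for the remaining implication. The only cosmetic difference is that the paper proves the cycle (a)~$\Rightarrow$~(b)~$\Rightarrow$~(c)~$\Rightarrow$~(a), applying the splitting argument to the quantity $(g(v))^{-\gamma}g'(v)v$ rather than to $(g(v))^{1-\gamma}$, whereas you extract the two-sided bound from item~(4) at once to get (b)~$\Leftrightarrow$~(c) directly; your organization is in fact slightly cleaner.
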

\begin{proof}
	
Firstly let us prove that	$(a)\Rightarrow (b)$. We denote by $A_{1}=\left\{x\in \Omega:|v(x)|\leq 1\right\}$ and $A_{2}=\left\{x\in \Omega:|v(x)|> 1\right\}$. By the Lemma \ref{L1} $(4),(8)$ we have
	$$|h(x)(g(v))^{-\gamma}g^{\prime}(v)v|\leq C^{1-\gamma} h(x)|v|^{1-\gamma},~\forall x\in A_{1},$$
    and
    $$
    \begin{array}{rl}
|h(x)(g(v))^{-\gamma}g^{\prime}(v)v| \leq & h(x)|(g(v))^{1-\gamma}|\\
                                     \leq & C^{1-\gamma}h(x)|v|^{\frac{1-\gamma}{2}}\\
                                     \leq & C^{1-\gamma}h(x),~\forall x\in A_{2},
    \end{array}
    $$
	and this implies that
	\begin{equation}\label{100}
	h(g(v))^{-\gamma}g^{\prime}(v)v\in L^{1}(A_{1})\cap L^{1}(A_{2}),
	\end{equation}
	because $h|v|^{1-\gamma},h\in L^{1}(\Omega)$.
	
	 Now, using \eqref{100} we conclude that $h(g(v))^{-\gamma}g^{\prime}(v)v\in L^{1}(\Omega)$ because
	$$h(x)(g(v))^{-\gamma}g^{\prime}(v)v=h(x)(g(v))^{-\gamma}g^{\prime}(v)v\chi_{A_{1}}+h(x)(g(v))^{-\gamma}g^{\prime}(v)v\chi_{A_{2}}.$$
	
	To prove that $(b)\Rightarrow (c)$ note that by Lemma \ref{L1} $(4)$,
	$$\frac{1}{2}\int_{\Omega} h(x)(g(v))^{1-\gamma}\leq \int_{\Omega} h(x)(g(v))^{-\gamma}g^{\prime}(v)v <\infty. $$
	
	Finally to prove that $(c)\Rightarrow (a)$ we use the Lemma \ref{L1} $(5)$ to obtain that
	$$\int_{\Omega} h(x)|v|^{1-\gamma}\leq \int_{\Omega} h(x)(g(v))^{1-\gamma}<\infty, $$
	and the proof is completed.
\end{proof}

Note that if $v_{0}$ satisfies the condintion (\ref{C}) then, since $\rvert v_{0}\rvert \in H_{0}^{1}(\Omega)$ we have that $\rvert v_{0}\rvert$ satisfies the condition (\ref{C}). Hence we may assume that $v_{0}\geq 0$. Also as we are interested in positive solution let us work on the following subset of $H_{0}^{1}(\Omega)$
$$V_{+}=\left\{v\in H_{0}^{1}(\Omega)\setminus\{0\}: v\geq 0\right\}.$$

Assume that $v\in V_{+}$ and 
\begin{equation}\label{CD}
\int_{\Omega} h(x)|v|^{1-\gamma}<\infty (~\mbox{and therefore}~ \int_{\Omega} h(x)(g(v))^{1-\gamma}< \infty),
\end{equation}
and consider the fiber map $\phi_{v}:(0,\infty)\rightarrow \mathbb{R}$
$$\phi_{v}(t):=\Phi(tv)=\frac{t^{2}}{2}\int_{\Omega} |\nabla v|^{2}+\frac{1}{\gamma-1}\int_{\Omega} h(x)(g(tv))^{1-\gamma}-\int_{\Omega} F(x,g(tv)).$$

The understanding of the fibering maps will be extremely important in the next section. Let us start by proving that for each $v$ satisfying (\ref{CD}) the fiber map associated to $v$ has a good regularity. 

\begin{lemma} \label{C1}
	We have that $\phi_{v}\in C^{1}((0,\infty),\mathbb{R})$ for each $v$ satisfying (\ref{CD}).
\end{lemma}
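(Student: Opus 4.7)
The plan is to write
$$\phi_v(t)=\frac{t^2}{2}\int_\Omega|\nabla v|^2\,dx+S(t)-N(t),\quad S(t):=\frac{1}{\gamma-1}\int_\Omega h(x)(g(tv))^{1-\gamma}\,dx,\ N(t):=\int_\Omega F(x,g(tv))\,dx,$$
and to prove that $S$ and $N$ both belong to $C^1((0,\infty),\mathbb{R})$ by applying the dominated convergence theorem to the difference quotients. The quadratic piece is $C^\infty$, so everything reduces to these two integral terms.

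The term $N$ is the easier one. Under $(f)_1$, Lemma \ref{L1}(3) and (5) give $|f(x,g(tv))g'(tv)v|\le \|b\|_\infty (tv)^p|v|$, so on any compact interval $[a,b]\subset(0,\infty)$ the integrand of the difference quotient is dominated by a constant multiple of $|v|^{p+1}$, which is in $L^1(\Omega)$ by the Sobolev embedding. Under $(f)_2$, the pointwise derivative contains the factor $(g(tv))^{2\cdot 2^{\ast}-1}g'(tv)$; I would factor this as $(g(tv))^{2\cdot 2^{\ast}-2}\cdot g(tv)g'(tv)$, control the second factor by $1/\sqrt{2}$ via Lemma \ref{L1}(12), and dominate the first by $C|v|^{2^{\ast}-1}$ via Lemma \ref{L1}(6), producing the integrable majorant $C|v|^{2^{\ast}}$.

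The delicate part is $S$, since the factor $(g(tv))^{-\gamma}$ blows up where $v$ is small. Fix $t_0>0$ and choose $0<a<t_0<b$. The candidate derivative is
$$S'(t)=-\int_\Omega h(x)(g(tv))^{-\gamma}g'(tv)v\,dx.$$
The idea is to compress the size of the integrand back into the singular energy itself: Lemma \ref{L1}(4) gives $g'(tv)v\le g(tv)/t$ wherever $v>0$, hence
$$h(x)(g(tv))^{-\gamma}g'(tv)v\le \frac{1}{t}\,h(x)(g(tv))^{1-\gamma}.$$
Lemma \ref{L1}(10) says $(g(s))^{1-\gamma}$ is decreasing in $s>0$, so for $t\in[a,b]$ the right-hand side is bounded by $\tfrac{1}{a}h(x)(g(av))^{1-\gamma}$, which lies in $L^1(\Omega)$ by hypothesis (\ref{CD}) applied to $av$ together with Lemma \ref{L2}. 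A mean value theorem applied to $s\mapsto (g(s))^{1-\gamma}$ inside the difference quotient of $S$, combined with the dominated convergence theorem using this majorant, yields the formula for $S'(t_0)$. Continuity of $S'$ at $t_0$ then follows from a second application of dominated convergence, using the same dominating function and the pointwise continuity of the integrand in $t$.

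The main obstacle is precisely the construction of this majorant for the singular term: the two independent estimates from Lemma \ref{L1}(4) and Lemma \ref{L1}(10) must be combined in order to convert the derivative of the singular energy back into the singular energy itself, at which point integrability is inherited from hypothesis (\ref{CD}) through Lemma \ref{L2}.
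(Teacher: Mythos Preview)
Your proof is correct and follows essentially the same approach as the paper: apply the mean value theorem to the difference quotient of the singular term and then dominated convergence, with the majorant built from the monotonicity properties of $g$ (the paper observes via Lemma \ref{L1}(9),(10) that $t\mapsto (g(t))^{-\gamma}g'(t)$ is decreasing, while you combine (4) and (10) to the same effect) and integrability supplied by Lemma \ref{L2}. Your treatment is in fact slightly more complete: you work on a compact interval $[a,b]$ so that both one-sided limits are covered uniformly, and you also handle the non-singular term $N$ explicitly under both $(f)_1$ and $(f)_2$, whereas the paper writes only the case $s>0$ for the singular piece and dismisses the rest as routine.
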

\begin{proof}
	We have that prove just that  $\Gamma:(0,\infty )\longrightarrow \mathbb{R}$ defined by 
	$$\Gamma (t)= \displaystyle\int_{\Omega} h(x)(g(tv))^{1-\gamma },$$
	is of classe $C^{1}$. To do this we fix $t>0$ and note that for each $s>0$ by Mean Value Theorem there exits a mensurable function $\theta=\theta(s,x) \in (0,1)$ such that, 
	$$\Gamma(t+s)-\Gamma(t)=(1-\gamma)\displaystyle \int_{\Omega} h(x)(g((t+\theta s)v))^{-\gamma}g^{\prime}((t+\theta s)v) sv$$
	and $t+\theta(s,x)s\longrightarrow t$ as $s\longrightarrow 0$. 
	
	As the function $(g(t))^{-\gamma}g^{\prime}(t),t>0$ is decreasing (by Lemma \ref{L1}$(9),(10)$) follows that $(g((t+\theta s)v))^{-\gamma}g^{\prime}((t+\theta s)v)\leq (g(tv))^{-\gamma}g^{\prime}(tv)$. Furthermore, as consequence of the Lemma \ref{L2}, $h(g(tv))^{-\gamma}g^{\prime}(tv)v\in L^{1}(\Omega)$. Hence we are able to apply the Lebesgue's dominated convergence theorem to infer that
	$$\Gamma^{\prime}(t)=\displaystyle \lim_{s\to 0}\frac{\Gamma(t+s)-\Gamma(t)}{s}=(1-\gamma)\displaystyle \int_{\Omega} h(x)(g(tv))^{-\gamma}g^{\prime}(tv)v,$$
	that is, the derivative $\Gamma^{\prime}(t)$ there exists for all $t>0$ and is given by the last expression above. Now, using the Lemma \ref{L2} and the Lebesgue's dominated convergence theorem we have that $\Gamma^{\prime}:(0,\infty)\longrightarrow \mathbb{R}$ is a continuous function.
	\end{proof}

The next lemma guarantees that for each $v$ satisfying (\ref{CD}) the fiber map $\phi_{v}$ assumes its minimum value and therefore $\phi_{v}$ has a critical point.

\begin{lemma}\label{L3} For each $v$ satisfying (\ref{CD}) there holds
	$$\displaystyle \lim_{t\to 0}\phi_{v}(t)=\infty~\mbox{and}~\displaystyle \lim_{t\to \infty}\phi_{v}(t)=\infty,$$
	and therefore there exists $t(v)>0$ such that
	$$\phi_{v}(t(v))=\displaystyle \inf_{t>0}\phi_{v}(t).$$
	\begin{proof}
		Firstly we will consider the sublinear case, that is $(f)_{1}$ with $p\in (0,1)$. By the Lemma \ref{L1} $(5)$ we have that
		$$\int_{\Omega} h(x) (g(tv))^{1-\gamma}dx\geq t^{1-\gamma}\int_{\Omega} h(x) |v|^{1-\gamma},$$
		and
		$$ t^{p+1}\int_{\Omega} |b(x)||v|^{p+1}\geq  \rvert \int_{\Omega} b(x)(g(tv))^{p+1}\rvert \geq 0,$$
		which implies that
		$$\displaystyle \lim_{t\to 0}\int_{\Omega} h(x) (g(tv))^{1-\gamma}dx=\infty ~\mbox{and}~\displaystyle \lim_{t\to 0}\int_{\Omega} b(x)(g(tv))^{p+1}=0.$$
		
		Since $\gamma>1$ we have that $\displaystyle \lim_{t\to 0}\phi_{v}(t)=\infty$.
		
		By other side
		$$\displaystyle \lim_{t\to \infty}\phi_{v}(t)\geq  \lim_{t\to \infty}t^{2}\left[ ||v||^{2}-t^{p-2}\frac{\rVert b\rVert_{\infty}}{p+1}\int_{\Omega} |v|^{p+1}dx\right]=\infty.$$
		
		The continuity of the function $\phi_{v}$ and the limits $\displaystyle \lim_{t\to 0}\phi_{v}(t)=\infty$ and $\displaystyle \lim_{t\to \infty}\phi_{v}(t)=\infty$ implies that there exists $t(v)>0$ such that $\phi_{v}(t(v))=\displaystyle \inf_{t>0}\phi_{v}(t).$
		
		If $(f)_{2}$ is satisfied the proof is similar.
	\end{proof}
	
	The following picture give the possible graph of the fiber map.
\end{lemma}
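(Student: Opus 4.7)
The plan is to decompose $\phi_{v}(t)$ into its three summands and track which one dominates in each limit, using the pointwise estimates on $g$ collected in Lemma \ref{L1} together with the finiteness hypothesis (\ref{CD}). Continuity of $\phi_{v}$ was already obtained in Lemma \ref{C1}, so once I show $\phi_{v}(t)\to+\infty$ at both endpoints of $(0,\infty)$, a standard Weierstrass argument yields a global minimizer $t(v)>0$.

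For $t\to 0^{+}$, I expect the singular term to blow up and dominate. The key pointwise bound is $g(tv)\le tv$ on $\{v>0\}$ from Lemma \ref{L1}(5); since $1-\gamma<0$ this reverses to $(g(tv))^{1-\gamma}\ge t^{1-\gamma}v^{1-\gamma}$, giving
\[
\frac{1}{\gamma-1}\int_{\Omega} h(x)(g(tv))^{1-\gamma}\,dx\;\ge\;\frac{t^{1-\gamma}}{\gamma-1}\int_{\Omega} h(x)v^{1-\gamma}\,dx\;\longrightarrow\;+\infty,
\]
where the integral on the right is strictly positive because $h>0$ a.e.\ and $\{v>0\}$ has positive measure. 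The kinetic term is $O(t^{2})$ and hence negligible. For the $F$-term under $(f)_{1}$, the same comparison $g(tv)\le tv$ yields $\int_{\Omega}|F(x,g(tv))|\le \tfrac{\|b\|_{\infty}}{p+1}\,t^{p+1}\int_{\Omega}v^{p+1}\,dx\to 0$; under $(f)_{2}$, $F(x,g(tv))\le 0$ already forces $-\int_{\Omega}F\ge 0$, which only helps the lower bound. Either way $\phi_{v}(t)\to+\infty$.

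For $t\to\infty$, the singular term is nonnegative and the kinetic term grows like $t^{2}$, so the only possible competitor is $-\int_{\Omega}F$. Under $(f)_{1}$, invoking $g(tv)\le tv$ once more gives
\[
\phi_{v}(t)\;\ge\;\frac{t^{2}}{2}\|v\|^{2}\;-\;\frac{\|b\|_{\infty}}{p+1}\,t^{p+1}\int_{\Omega}v^{p+1}\,dx,
\]
which diverges since $p+1<2$. Under $(f)_{2}$, $F(x,g(tv))\le 0$ directly gives $\phi_{v}(t)\ge \tfrac{t^{2}}{2}\|v\|^{2}\to\infty$. The only delicate point is really the $t\to 0^{+}$ half, where one must be sure that the positivity of $\int_{\Omega}hv^{1-\gamma}\,dx$ makes the lower bound actually blow up; the $t\to\infty$ half reduces to a one-line comparison of powers. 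Combining both limits with the continuity of $\phi_{v}$ finishes the argument.
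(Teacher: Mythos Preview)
Your proof is correct and follows essentially the same approach as the paper: both use Lemma~\ref{L1}(5) to bound the singular term below by $t^{1-\gamma}\int_{\Omega}h\,v^{1-\gamma}$ and the $F$-term above by $C\,t^{p+1}$ in the $(f)_{1}$ case, then compare the $t^{2}$ kinetic growth to the $t^{p+1}$ competitor at infinity and invoke continuity for the minimizer. Your treatment of $(f)_{2}$ (simply noting $F\le 0$ so $-\int F\ge 0$) is exactly what the paper means by ``the proof is similar'', and your remark on the strict positivity of $\int_{\Omega}h\,v^{1-\gamma}$ makes explicit a point the paper leaves implicit.
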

\begin{figure}[h]
	\centering
	\begin{tikzpicture}[scale=.30]
	\draw[thick, ->] (-1, 3) -- (6, 3);
	\draw[thick, ->] (0, 0) -- (0, 7);
	\draw[thick] (0.5, 6.8) .. controls (0, 1) and (4, -2) ..(5,6.8);
	\draw (0,7) node[left]{$\phi_{v}$};
	\draw (6,3) node[below]{$t$};
	\draw (0, 3) node[below left]{$0$};
	\draw (2.5,2.8) node[above]{$t(v)$};
	\draw[thick, dotted] (2.5, 3) -- (2.5,1.3);
	\draw (3,0) node[below]{\textrm{Fig. 1 }};
	\end{tikzpicture}
	\hspace{0.8cm}
	\begin{tikzpicture}[scale=.30]
	\draw[thick, ->] (-1, 3) -- (6, 3);
	\draw[thick, ->] (0, 0) -- (0, 7);
	\draw[thick] (0.5, 6.8) .. controls (1, 4) and (3.7, 2) ..(5.5,6.8);
	\draw (0,7) node[left]{$\phi_{v}$};
	\draw (6,3) node[below]{$t$};
	\draw (0, 3) node[below left]{$0$};
	\draw (3.5,1) node[above]{$t(v)$};
	\draw[thick, dotted] (2.9, 3) -- (2.9,4);
	\draw (3,0) node[below]{\textrm{Fig. 2 }};
	\end{tikzpicture}
\end{figure}
\begin{remark}
When $(f)_{1}$ is satisfied the graph of $\phi_{v}$ can be as in Figures $1$ and $2$. On the other hand if $(f)_{2}$ is satisfied then the graph of $\phi_{v}$ can only be as in Figure $2$.
\end{remark}

Motivated by \cite{Y} we define the following constraint sets for the problem $(P_{A})$ 
$$\mathcal{N}_{1}=\left\{v\in V_{+}: ||v||^{2}-\int_{\Omega} f(x,g(v))g^{\prime}(v)v\geq \int_{\Omega} h(x)(g(v))^{-\gamma}g^{\prime}(v)v \right\},$$
and
$$\mathcal{N}_{2}=\left\{v\in V_{+}: ||v||^{2}-\int_{\Omega} f(x,g(v))g^{\prime}(v)v= \int_{\Omega} h(x)(g(v))^{-\gamma}g^{\prime}(v)v \right\}.$$

We note that if $v$ is a solution to the problem $(P_{A})$ then $v\in \mathcal{N}_{2}$.

It should be noted that for $\gamma>1$, $\mathcal{N}_{2}$ is not closed as usual (certainly not weakly closed).

Next lemma ensures that any function $v\in V_{+}$ satisfying the following condition of dual compatibility  
\begin{equation}\label{E1}
\int_{\Omega} h(x) (g(v))^{1-\gamma}<\infty,~v\in V_{+},
\end{equation}
can be projected over $\mathcal{N}_{2}$. 
\begin{lemma}\label{L5}
	Assume that there exists $v\in V_{+}$ such that $(\ref{E1})$ is satisfied. Then there exists $t(v)>0$ such that $t(v)v\in \mathcal{N}_{2}$. 
\end{lemma}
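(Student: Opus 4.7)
The plan is to observe that $\mathcal{N}_2$ is precisely the zero set of the derivative of the fiber map $\phi_v$, and then invoke the two previous lemmas.

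First I would compute $\phi_v'$. From Lemma \ref{C1} the map $t \mapsto \int_\Omega h(x)(g(tv))^{1-\gamma}$ is $C^1$ with derivative $(1-\gamma)\int_\Omega h(x)(g(tv))^{-\gamma}g'(tv)v$, and the non-singular terms are clearly $C^1$ (using $g'$ bounded and $v \in H_0^1$ together with the Sobolev embeddings to differentiate under the integral sign for $F(\cdot,g(tv))$). Hence
\[
\phi_v'(t) \;=\; t\,\|v\|^{2} \;-\; \int_\Omega h(x)(g(tv))^{-\gamma}g'(tv)\,v\,dx \;-\; \int_\Omega f(x,g(tv))\,g'(tv)\,v\,dx,
\]
so multiplying by $t>0$ shows that $\phi_v'(t)=0$ if and only if
\[
\|tv\|^{2} - \int_\Omega f(x,g(tv))g'(tv)(tv)\,dx \;=\; \int_\Omega h(x)(g(tv))^{-\gamma}g'(tv)(tv)\,dx,
\]
i.e.\ if and only if $tv \in \mathcal{N}_2$.

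Next I would note that hypothesis \eqref{E1}, together with $|g(v)|\leq |v|$ from Lemma \ref{L1}(5), implies $\int_\Omega h(x)|v|^{1-\gamma}\,dx < \infty$, i.e.\ condition \eqref{CD} holds. Therefore by Lemma \ref{C1} we have $\phi_v \in C^{1}((0,\infty),\mathbb{R})$, and by Lemma \ref{L3} both $\lim_{t\to 0^+}\phi_v(t)=+\infty$ and $\lim_{t\to \infty}\phi_v(t)=+\infty$ hold, so that $\phi_v$ attains its infimum at some interior point $t(v)>0$ with $\phi_v(t(v))=\inf_{t>0}\phi_v(t)$.

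Finally, since $t(v)$ is an interior minimum of the $C^1$ function $\phi_v$, Fermat's theorem gives $\phi_v'(t(v))=0$, and by the equivalence in the first step we conclude $t(v)v \in \mathcal{N}_2$. The only point that requires a little care is the differentiation under the integral sign for the singular term, but this is precisely what Lemma \ref{C1} delivers; once that is in hand the argument is essentially a one-line consequence of Lemmas \ref{C1} and \ref{L3}.
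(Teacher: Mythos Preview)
Your proof is correct and follows essentially the same route as the paper: both use Lemma~\ref{L2} (or equivalently Lemma~\ref{L1}(5)) to pass from \eqref{E1} to \eqref{CD}, invoke Lemma~\ref{C1} for $C^1$ regularity of $\phi_v$, apply Lemma~\ref{L3} to produce an interior minimizer $t(v)$, and then read off $t(v)v\in\mathcal{N}_2$ from $\phi_v'(t(v))=0$. Your write-up is slightly more explicit in spelling out the equivalence $\phi_v'(t)=0\iff tv\in\mathcal{N}_2$, but the argument is the same.
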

\begin{proof}
	As $v$ satisfied \eqref{E1} it follows from the Lemma \ref{L2} that $v$ satisfied \eqref{CD} also and therefore by Lemma \ref{C1} we have that $\phi_{v}\in C^{1}((0,\infty),\mathbb{R})$. Follows from the Lemma \ref{L3} that there exists $t(v)>0$ such that
	$$\phi_{v}(t(v))=\displaystyle \inf_{t>0}\phi_{v}(t),$$
	that is $t(v)$ is a critical point of $\phi_{v}$ and hence $\phi_{v}^{\prime}(t(v))=0$, which implies that
	\begin{align*}
	||t(v)v||^{2}-\int_{\Omega} h(x) (g(t(v)v))^{-\gamma}g^{\prime}(t(v)v)(t(v)v)-\int_{\Omega} f(x,t(v)v)g^{\prime}(t(v)v)(t(v)v)\\=t(v)\phi_{v}^{\prime}(t(v))=0.
	\end{align*}
	
	So $t(v)v\in \mathcal{N}_{2}\subset \mathcal{N}_{1}$.

\end{proof}

The following lemmas will be used to prove the regularity of the solution.
\begin{lemma}\label{BN}
	 Let $\Omega$ be a bounded domain in $\mathbb{R}^{N}$ with smooth boundary $\partial \Omega$. Let $u\in L^{1}_{loc}(\Omega)$ and assume that, for some $k\geq 0$, $u$ satisfies, in the sense of distributions,
	$$
	\left\{
	\begin{array}{c}
	-\Delta u+ku\geq 0~\mbox{in}~\Omega\\
	u\geq 0~~\mbox{in}~~\Omega.\\
	\end{array}
	\right.
	$$
	Then either $u\equiv 0$, or there exists $\epsilon>0$ such that $u(x)\geq \epsilon d(x,\partial \Omega),~x\in \Omega.$
\end{lemma}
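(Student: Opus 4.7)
The lemma is a classical Brezis--Nirenberg-type strong minimum principle combined with Hopf boundary decay for distributional supersolutions, so the proof naturally splits into an \emph{interior positivity phase} and a \emph{boundary Hopf-barrier phase}.

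\textbf{Interior phase.} My first step is to mollify $u$. Setting $u_\eta=u\ast\rho_\eta$ on $\Omega_\eta=\{x\in\Omega:d(x,\partial\Omega)>\eta\}$, convolution commutes with the distributional inequality, so $u_\eta\in C^{\infty}(\Omega_\eta)$ is nonnegative and satisfies $-\Delta u_\eta+ku_\eta\geq 0$ pointwise. The classical strong maximum principle (Gilbarg--Trudinger, Thm.\ 3.5) applied to $u_\eta$ on any ball contained in $\Omega_\eta$ gives the dichotomy $u_\eta\equiv 0$ or $u_\eta>0$ on each connected component. Passing to the limit $\eta\downarrow 0$ with $u_\eta\to u$ in $L^{1}_{\mathrm{loc}}$, and chaining Harnack inequalities on overlapping balls, the dichotomy transfers to $u$: either $u\equiv 0$, or for every compact set $K\subset\Omega$ there exists $c_K>0$ with $u\geq c_K$ a.e.\ on $K$.

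\textbf{Boundary phase.} Assume $u\not\equiv 0$. Smoothness of $\partial\Omega$ yields a uniform interior-ball radius $r_0>0$: at every $x_0\in\partial\Omega$ there is $y_0$ with $B_{r_0}(y_0)\subset\Omega$ tangent at $x_0$. Fix $\delta_0\in(0,r_0/2)$, let $K_0=\{x\in\Omega:d(x,\partial\Omega)\geq \delta_0\}$, and choose $c_0>0$ with $u\geq c_0$ a.e.\ on $K_0$ from the interior phase. For each boundary point I would employ the standard Hopf barrier
\[
w(x)=\sigma\bigl(e^{-\alpha|x-y_0|^{2}}-e^{-\alpha r_0^{2}}\bigr)
\]
on the annulus $A=B_{r_0}(y_0)\setminus\overline{B_{r_0/2}(y_0)}$. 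A direct computation shows that for $\alpha$ sufficiently large (depending only on $r_0$, $N$, and $k$) one has $-\Delta w+kw\leq 0$ in $A$; moreover $w=0$ on $\partial B_{r_0}(y_0)$, and selecting $\sigma>0$ small (uniformly in $x_0$) gives $w\leq c_0$ on $\partial B_{r_0/2}(y_0)\subset K_0$. The weak comparison principle, applied first to $u_\eta-w$ and then passed to the limit $\eta\downarrow 0$, gives $u\geq w$ in $A$. Since $w(x)\geq c'\sigma(r_0-|x-y_0|)\geq c''\sigma\, d(x,\partial\Omega)$ on the outer half of $A$, one obtains a linear lower bound in a fixed neighbourhood of $x_0$. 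Combining this with $u\geq c_0\geq(c_0/\mathrm{diam}(\Omega))\,d(x,\partial\Omega)$ on $K_0$ produces a single $\epsilon>0$ that works globally.

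\textbf{Main obstacle.} The crucial technical point is that $u$ lies only in $L^{1}_{\mathrm{loc}}$, which is insufficient to apply the strong maximum principle or the weak comparison principle directly. The remedy is precisely to run both arguments on the smooth mollifications $u_\eta$, where everything is classical, and then transfer the resulting almost-everywhere inequalities to $u$ via $L^{1}_{\mathrm{loc}}$ convergence together with interior Harnack estimates on the limit. Uniformity of $\epsilon$ over $\partial\Omega$ is then an immediate consequence of the uniform interior-ball radius $r_0$ (from smoothness of $\partial\Omega$) and compactness of $\partial\Omega$.
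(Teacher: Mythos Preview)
The paper does not prove this lemma at all: its ``proof'' consists solely of the citation ``See Brezis--Nirenberg [\cite{BN}, Theorem~3].'' So there is no argument in the paper to compare your sketch against; the authors simply quote the result as a black box. Your outline, by contrast, reproduces the essential structure of the original Brezis--Nirenberg argument (mollification to smooth supersolutions, strong minimum principle and weak Harnack in the interior, Hopf-type barrier in a uniform tubular neighbourhood of $\partial\Omega$), and in that sense goes well beyond what the paper offers.

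The sketch is sound in spirit, but one technical wrinkle deserves care. In your boundary phase you compare $u_\eta$ with the barrier $w$ on the annulus $A=B_{r_0}(y_0)\setminus\overline{B_{r_0/2}(y_0)}$, yet the outer sphere $\partial B_{r_0}(y_0)$ touches $\partial\Omega$ at $x_0$, while $u_\eta$ is only defined on $\Omega_\eta=\{d(\cdot,\partial\Omega)>\eta\}$. Thus $u_\eta$ is not defined on all of $\overline{A}$, and the weak comparison principle cannot be invoked there as stated. The standard remedy is either to shrink the interior ball to $B_{r_0-2\eta}(y_0)$ (so the corresponding annulus sits inside $\Omega_\eta$, with barrier constants still uniform as $\eta\downarrow 0$), or---cleaner---to replace the pointwise Hopf barrier by a single global comparison: once $u\geq c_0$ a.e.\ on $K_0=\{d\geq\delta_0\}$, compare $u$ with the smooth solution $z$ of $-\Delta z+kz=0$ in $\Omega\setminus K_0$, $z=c_0$ on $\partial K_0$, $z=0$ on $\partial\Omega$, for which the classical Hopf lemma gives $z\geq\epsilon\,d(\cdot,\partial\Omega)$. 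Either fix is routine; the overall strategy is correct.
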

\begin{proof}
	See Brezis-Nirenberg [\cite{BN}, Theorem 3].
\end{proof}
\begin{lemma}\label{Ht}
	Let $a\in L^{1}(\Omega)$ and suppose that there exist constants $\delta \in (0,1)$ and $C>0$ such that $|a(x)|\leq C\phi_{1}^{-\delta}(x),$ for a.e. $x\in \Omega$. Then, the problem
	$$
	\left\{
	\begin{array}{c}
	-\Delta u= a ~\mbox{in}~\Omega\\
	u=0 ~~\mbox{on}~~\partial\Omega,\\
	\end{array}
	\right.
	$$
	has a unique solution $u\in H_{0}^{1}(\Omega)$. Furthermore, there exist constants $\alpha \in (0,1)$ and $M>0$ depending only on $C,\alpha, \Omega $ such that $u\in C^{1,\alpha}(\overline{\Omega})$ and $|u|_{1,\alpha}<M$.
\end{lemma}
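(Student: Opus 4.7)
The plan is to handle existence and uniqueness separately from the $C^{1,\alpha}$ upgrade. By Hopf's lemma there exists $c_{0}>0$ with $\phi_{1}(x)\geq c_{0}\,d(x,\partial\Omega)$ in $\Omega$, so the hypothesis gives $|a(x)|\leq C'\,d(x,\partial\Omega)^{-\delta}$. For any $\varphi\in H_{0}^{1}(\Omega)$, Cauchy--Schwarz together with Hardy's inequality (valid since $\Omega$ is a bounded smooth domain) yields
\begin{equation*}
\int_{\Omega} |a\,\varphi|\,dx\leq C'\left(\int_{\Omega} d(x,\partial\Omega)^{2(1-\delta)}dx\right)^{\!1/2}\!\left(\int_{\Omega}\frac{\varphi^{2}}{d(x,\partial\Omega)^{2}}dx\right)^{\!1/2}\leq C''\,\|\varphi\|,
\end{equation*}
where the first factor is finite because $\delta<1$ and $\phi_{1}$ is bounded on $\overline{\Omega}$. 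Hence $a$ induces a continuous linear functional on $H_{0}^{1}(\Omega)$, and the Lax--Milgram theorem applied to the coercive and continuous bilinear form $(u,v)\mapsto\int_{\Omega}\nabla u\cdot\nabla v\,dx$ produces the unique weak solution $u\in H_{0}^{1}(\Omega)$.

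For the regularity claim, I would first derive a pointwise bound of the form $|u(x)|\leq A\,\phi_{1}(x)^{\theta}$ for some (in fact any) $\theta\in(0,1)$. The direct computation
\begin{equation*}
-\Delta(\phi_{1}^{\theta})=\theta(1-\theta)\phi_{1}^{\theta-2}|\nabla\phi_{1}|^{2}+\theta\lambda_{1}\phi_{1}^{\theta},
\end{equation*}
combined with the fact that $|\nabla\phi_{1}|$ is bounded below in a neighbourhood of $\partial\Omega$, shows that for every $\theta\in(0,1)$ one can choose $A$ so large that $-\Delta(A\phi_{1}^{\theta})\geq C\phi_{1}^{-\delta}\geq |a|$ in $\Omega$ (the interior part being handled trivially since $\phi_{1}$ stays away from $0$ on compact subsets of $\Omega$). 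Applying the weak comparison principle to $A\phi_{1}^{\theta}\pm u$ then gives $|u|\leq A\phi_{1}^{\theta}$; in particular $u\in L^{\infty}(\Omega)$ with the correct boundary decay.

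With this bound at hand, the conclusion $u\in C^{1,\alpha}(\overline{\Omega})$ and the uniform estimate $|u|_{1,\alpha}<M$ follow from the classical boundary regularity theorem for Dirichlet problems whose right-hand side is dominated by a negative power, strictly less than one, of the distance to the boundary (see, e.g., Gui--Lin). The underlying scaling picture is that a function behaving like $d(x,\partial\Omega)^{1+\alpha}$ with $\alpha=1-\delta$ is $C^{1,\alpha}$ up to $\partial\Omega$ and has Laplacian of order $d^{-\delta}$, so the expected regularity matches the strength of the singularity; the proof consists of flattening the boundary and running a weighted Schauder argument. The main obstacle is precisely this last step: bare $L^{p}$--$W^{2,p}$ theory is insufficient, because $\phi_{1}^{-\delta}\in L^{p}(\Omega)$ only for $p<1/\delta$, which falls short of the Morrey threshold $p>N$ unless $\delta<1/N$, so one genuinely needs the weighted/boundary version rather than the standard Calder\'on--Zygmund estimates.
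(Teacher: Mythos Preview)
The paper gives no argument of its own here: its entire proof is the one-line citation ``See Hai [\cite{H}, Lemma~2.1, Remark~2.2].'' So there is nothing to compare at the level of technique; you have instead sketched what a self-contained proof of Hai's lemma would look like.

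Your sketch is essentially correct. The Lax--Milgram/Hardy step for existence and uniqueness is clean. For the barrier argument one small caveat: to apply the weak comparison principle in $H_{0}^{1}(\Omega)$ you should restrict to $\theta\in(1/2,1)$, so that $\phi_{1}^{\theta}\in H_{0}^{1}(\Omega)$ (its gradient behaves like $\phi_{1}^{\theta-1}$, which is square-integrable exactly when $\theta>1/2$); the claim ``for every $\theta\in(0,1)$'' is slightly too strong as written, though any single $\theta>1/2$ already suffices. You have also correctly identified the genuine obstacle---that $\phi_{1}^{-\delta}\in L^{p}(\Omega)$ only for $p<1/\delta$, so Calder\'on--Zygmund plus Morrey does not reach $C^{1}$ unless $\delta<1/N$---and the right remedy, namely a weighted boundary-Schauder argument of Gui--Lin type, which is precisely what Hai's lemma packages.
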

\begin{proof}
	See Hai [\cite{H}, Lemma 2.1, Remark 2.2].
\end{proof}
\begin{remark} For a later use we recall that there exist constants $l_{1},l_{2}>0$ such that
	$$l_{1}d(x,\partial \Omega)\leq \phi_{1}(x)\leq l_{2}d(x,\partial \Omega),~x\in \Omega,$$
	where $\phi_{1}$ is the first eigenfunction of $(-\Delta,H_{0}^{1}(\Omega))$.
\end{remark}

\begin{lemma}\label{M}
	Let $\psi_{j}:\Omega \times (0,\infty)\longrightarrow [0,\infty), j=1,2$ are measurable functions such that
	$$\psi_{1}(x,s)\leq \psi_{2}(x,s)~\mbox{for all}~(x,s)\in \Omega \times (0,\infty),$$
	and for each $x\in \Omega$, the function $s\longmapsto \psi_{1}(x,s)s^{-1}$ is decreasing on $(0,\infty).$
	Furthermore let $u,v\in H^{1}(\Omega)$, with $u\in L^{\infty}(\Omega), u>0, v>0$ on $\Omega$ are such that
	$$-\Delta u\leq \psi_{1}(x,u)~\mbox{and}~ -\Delta v\geq \psi_{2}(x,v)~\mbox{on}~\Omega.$$
	If $u\leq v$ on $\partial \Omega$ and $\psi_{1}(\cdot,u)$ (or $\psi_{2}(\cdot,u)$) belongs to $L^{1}(\Omega)$, then $u\leq v$ on $\Omega$.
\end{lemma}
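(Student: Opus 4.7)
The plan is to run a Díaz–Saa style comparison argument, exploiting both the pointwise bound $\psi_1 \leq \psi_2$ and the quasi-monotonicity of $s \mapsto \psi_1(x,s)/s$ by inserting well-chosen quotient test functions into the weak sub/supersolution inequalities. Let $\Omega^+ := \{x \in \Omega : u(x) > v(x)\}$; the goal is $|\Omega^+| = 0$. Since $u \leq v$ on $\partial\Omega$ and $u,v > 0$ in $\Omega$, the natural test functions are
$$
\varphi_1 := \frac{(u^2 - v^2)^+}{u}, \qquad \varphi_2 := \frac{(u^2 - v^2)^+}{v},
$$
both nonnegative and vanishing on $\partial\Omega$. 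A standard $\varepsilon$-regularization (replacing $u,v$ by $u+\varepsilon$, $v+\varepsilon$ and letting $\varepsilon\to 0^+$) is needed to make these admissible in $H^1_0(\Omega)$, with the passage to the limit controlled by $u \in L^\infty(\Omega)$ together with $\psi_1(\cdot,u)\in L^1(\Omega)$ (or $\psi_2(\cdot,u)\in L^1(\Omega)$).

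Testing $-\Delta u \leq \psi_1(x,u)$ against $\varphi_1$ and $-\Delta v \geq \psi_2(x,v)$ against $\varphi_2$ and subtracting, a direct expansion of the gradients on $\Omega^+$ yields the Picone-type identity
$$
\nabla u \cdot \nabla \varphi_1 - \nabla v \cdot \nabla \varphi_2 = \left|\nabla u - \tfrac{u}{v}\nabla v\right|^2 + \left|\nabla v - \tfrac{v}{u}\nabla u\right|^2 \geq 0,
$$
while the right-hand side reduces to
$$
\int_{\Omega^+} (u^2 - v^2)\left[\frac{\psi_1(x,u)}{u} - \frac{\psi_2(x,v)}{v}\right].
$$
On $\Omega^+$ one has $u > v > 0$, so the monotonicity hypothesis gives $\psi_1(x,u)/u \leq \psi_1(x,v)/v$, and then $\psi_1 \leq \psi_2$ gives $\psi_1(x,v)/v \leq \psi_2(x,v)/v$; the bracket is thus $\leq 0$ and the whole right-hand side is $\leq 0$.

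Therefore both sides are zero a.e. on $\Omega^+$. The Picone identity forces $v\nabla u = u\nabla v$ a.e.\ on $\Omega^+$, equivalently $\nabla(u/v) = 0$ a.e.\ there, so $u/v$ is constant on each connected component of $\Omega^+$. Using $u = v$ on the boundary of $\Omega^+$ (in the $H^1$-trace sense: on $\partial\Omega$ by hypothesis, on the interior boundary by definition of $\Omega^+$), that constant must equal $1$, contradicting $u > v$ unless $|\Omega^+| = 0$, which is the desired conclusion.

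The main obstacle is not the algebra but the admissibility step: since $v$ is only assumed positive and may touch zero, $1/v$ can blow up, and $\varphi_1, \varphi_2$ are not a priori in $H^1_0(\Omega)$. The regularization $v \mapsto v+\varepsilon$ (and analogously for $u$) combined with Lebesgue dominated convergence, using $u\in L^\infty(\Omega)$ to bound the quotients and the hypothesis $\psi_1(\cdot,u) \in L^1(\Omega)$ to control the singular term, is the real technical content; the hypothesis on $\psi_j(\cdot,u)$ belonging to $L^1(\Omega)$ is precisely what closes this limiting argument.
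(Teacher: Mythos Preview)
The paper does not prove this lemma at all: its entire proof reads ``See Mohammed [\cite{M}, Theorem 4.1].'' So there is nothing to compare against in the paper itself; the result is imported wholesale from Mohammed's work on the $p$-Laplacian.

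Your Díaz--Saa / Picone-identity sketch is in fact the method Mohammed uses (specialized here to $p=2$), and the outline is correct: the test functions $\varphi_1,\varphi_2$, the algebraic identity for $\nabla u\cdot\nabla\varphi_1-\nabla v\cdot\nabla\varphi_2$, and the sign analysis of the right-hand side via the monotonicity of $s\mapsto\psi_1(x,s)/s$ together with $\psi_1\le\psi_2$ are exactly the ingredients. You also correctly flag that the genuine work lies in the admissibility of the quotient test functions, handled by the $\varepsilon$-shift and the hypotheses $u\in L^\infty(\Omega)$ and $\psi_1(\cdot,u)\in L^1(\Omega)$. One minor point: your final step (constancy of $u/v$ on components of $\Omega^+$ and matching the boundary value $1$) is a little delicate as stated, since $\Omega^+$ need not be open and trace arguments on its ``boundary'' require care; in practice one usually concludes more directly by observing that the left side is nonnegative and the right side nonpositive, forcing both integrals to vanish, and then arguing that the vanishing of the Picone integrand together with the structure of $(u^2-v^2)^+$ yields $(u-v)^+\equiv 0$. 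But this is a matter of presentation rather than a gap.
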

\begin{proof}
	See Mohammed [\cite{M}, Theorem 4.1].
\end{proof}

\section{Proof of Theorem \ref{T1}.}

In this section we will show the Theorem \ref{T1}. First we wil given some preliminary lemmas.
\begin{lemma}\label{LL1} The set $\mathcal{N}_{1}\neq \emptyset$  and the functional $\Phi$ is coercive in $\mathcal{N}_{1}$.
\end{lemma}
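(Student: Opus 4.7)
The plan splits into two parts: producing a point of $\mathcal{N}_{1}$, and establishing a lower bound of the form $\Phi(v)\geq \tfrac{1}{2}\|v\|^{2}-C\|v\|^{p+1}$ (or simpler) on $\mathcal{N}_{1}$.

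First, for $\mathcal{N}_{1}\neq \emptyset$, I would invoke the compatibility hypothesis (\ref{C}) that underlies the existence direction of Theorem \ref{T1}. Picking $v_{0}\in H_{0}^{1}(\Omega)$ with $\int_{\Omega} h(x)|v_{0}|^{1-\gamma}<\infty$ and replacing it by $|v_{0}|$ (which has the same $H_{0}^{1}$-norm and preserves the compatibility integral), I may suppose $v_{0}\in V_{+}$ satisfies (\ref{C}). By Lemma \ref{L2} this is equivalent to (\ref{E1}) for $v_{0}$, so Lemma \ref{L5} immediately produces $t(v_{0})>0$ with $t(v_{0})v_{0}\in \mathcal{N}_{2}\subset \mathcal{N}_{1}$.

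Next, for coercivity, fix $v\in \mathcal{N}_{1}$. Using Lemma \ref{L1}(4)--(6) together with $b\in L^{\infty}(\Omega)$ one verifies that $\int_{\Omega}|f(x,g(v))g^{\prime}(v)v|<\infty$ for every $v\in V_{+}$ (under either hypothesis on $f$, the integrand is bounded by $\|b\|_{\infty}v^{p+1}$ in case $(f)_{1}$ and by $C_{1}v^{2^{*}}$ in case $(f)_{2}$). The defining inequality of $\mathcal{N}_{1}$ then forces $\int_{\Omega}h(x)(g(v))^{-\gamma}g^{\prime}(v)v<\infty$, and Lemma \ref{L2} yields $\int_{\Omega}h(x)(g(v))^{1-\gamma}<\infty$, so $\Phi(v)$ is well defined. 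Now, in case $(f)_{1}$, using $|g(v)|\leq |v|$ (Lemma \ref{L1}(5)), $b\in L^{\infty}$, and the continuous embedding $H_{0}^{1}(\Omega)\hookrightarrow L^{p+1}(\Omega)$,
$$-\int_{\Omega} F(x,g(v))=-\frac{1}{p+1}\int_{\Omega} b(x)(g(v))^{p+1}\geq -\frac{\|b\|_{\infty}}{p+1}\int_{\Omega}|v|^{p+1}\geq -C\|v\|^{p+1},$$
and dropping the non-negative singular term gives $\Phi(v)\geq \tfrac{1}{2}\|v\|^{2}-C\|v\|^{p+1}$, which is coercive because $p+1<2$. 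In case $(f)_{2}$, since $F(x,s)=-b(x)s^{22^{*}}/(22^{*})$ and $b\geq 0$, one has $-\int_{\Omega}F(x,g(v))\geq 0$, so $\Phi(v)\geq \tfrac{1}{2}\|v\|^{2}$ and coercivity is trivial.

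The main conceptual step is the non-emptiness claim, for which the compatibility hypothesis (\ref{C}) and the fiber-map machinery of Section 2 (Lemmas \ref{L2} and \ref{L5}) are exactly what is needed. The coercivity itself is a short computation in which the constraint defining $\mathcal{N}_{1}$ plays only the cosmetic role of guaranteeing that $\Phi(v)$ is finite; the singular term contributes a non-negative summand that can simply be discarded.
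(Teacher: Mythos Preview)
Your proof is correct and follows essentially the same route as the paper: non-emptiness via Lemmas \ref{L2} and \ref{L5} applied to the compatibility function $v_{0}$, and coercivity by discarding the non-negative singular term and bounding the $b$-term through $|g(v)|\leq|v|$ and the Sobolev embedding to obtain $\Phi(v)\geq \tfrac{1}{2}\|v\|^{2}-C\|v\|^{p+1}$. The paper's Lemma \ref{LL1} is stated only under $(f)_{1}$ (your treatment of $(f)_{2}$ corresponds to the separate Lemma \ref{LLL1} in Section~4), and you add the helpful remark that the $\mathcal{N}_{1}$-constraint forces $\Phi(v)<\infty$, a point the paper leaves implicit.
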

\begin{proof}
	Since \eqref{C} is satisfied it follows from Lemmas \ref{L2}, \ref{L5} that $\mathcal{N}_{1}\neq \emptyset$. Now, let us prove that $\Phi$ is coercive. Indeed for every $v\in \mathcal{N}_{1}$,
	$$
	\begin{array}{rl}
	\Phi(v)= & \displaystyle\frac{1}{2}\int_{\Omega} |\nabla v|^{2}+\frac{1}{\gamma-1}\int_{\Omega} h(x)(g(v))^{1-\gamma}-\frac{1}{p+1}\int_{\Omega} b(x)(g(v))^{p+1}\\
	\geq & \displaystyle\frac{1}{2}\int_{\Omega} |\nabla v|^{2}-\frac{\rVert b\rVert_{\infty}}{p+1}\int_{\Omega} (g(v))^{p+1},
	\end{array}
	$$
	and by Lemma \ref{L1} $(5)$ and Sobolev embedding we have
	$$\Phi(v)\geq \frac{1}{2}\int_{\Omega} |\nabla v|^{2}-\frac{\rVert b\rVert_{\infty}}{p+1}\int_{\Omega} |v|^{p+1}\geq \frac{||v||^{2}}{2}-C\frac{||v||^{p+1}}{p+1},$$
	for some constant $C>0$. Since $p\in (0,1)$ follows that $\Phi$ is coercive.
\end{proof}

By the Lemma \ref{LL1} we have that
$$J_{1}=\displaystyle \inf_{v\in \mathcal{N}_{1}}\Phi(v)~\mbox{and}~J_{2}=\displaystyle \inf_{v\in \mathcal{N}_{2}}\Phi(v),$$
are well defined with $J_{1},J_{2}\in \mathbb{R}$  and $J_{2}\geq J_{1}$.
\begin{lemma}\label{LL2}
	There exists $v\in \mathcal{N}_{2}$ such that $J_{1}=\Phi(v)=J_{2}.$
\end{lemma}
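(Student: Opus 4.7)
The plan is to realize the common infimum $J_1=J_2$ as the value $\Phi(\tilde v)$ where $\tilde v$ is the projection onto $\mathcal{N}_2$ of the weak limit of a minimizing sequence. I first verify $J_1=J_2$: the inequality $J_2\ge J_1$ is immediate since $\mathcal{N}_2\subset \mathcal{N}_1$, while for the converse I pick any $w\in \mathcal{N}_1$. The very definition of $\mathcal{N}_1$ forces $\int_\Omega h(g(w))^{-\gamma}g'(w)w<\infty$ (the LHS of the defining inequality is finite), so by Lemma \ref{L2} the function $w$ satisfies \eqref{E1} and Lemma \ref{L5} produces $t(w)>0$ with $t(w)w\in \mathcal{N}_2$ and $\phi_w(t(w))=\inf_{t>0}\phi_w(t)$. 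Hence $\Phi(t(w)w)\le \phi_w(1)=\Phi(w)$; taking the infimum over $\mathcal{N}_1$ gives $J_2\le J_1$, so $J_1=J_2$.

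Next, take a minimizing sequence $(v_n)\subset \mathcal{N}_2$ with $\Phi(v_n)\to J_2$. Coercivity of $\Phi$ on $\mathcal{N}_1\supset\mathcal{N}_2$ (Lemma \ref{LL1}) bounds $(v_n)$ in $H_{0}^{1}(\Omega)$, so up to a subsequence $v_n\rightharpoonup v$ in $H_{0}^{1}(\Omega)$, $v_n\to v$ in $L^q(\Omega)$ for every $1\le q<2^{\ast}$, and $v_n\to v$ a.e., with $v\ge 0$. Rewriting the definition of $\Phi$,
\[
\frac{1}{\gamma-1}\int_{\Omega} h(x)(g(v_n))^{1-\gamma}=\Phi(v_n)-\frac{1}{2}\|v_n\|^2+\frac{1}{p+1}\int_{\Omega} b(x)(g(v_n))^{p+1},
\]
and using the $H_{0}^{1}$-boundedness of $(v_n)$, $|g(v_n)|\le v_n$, and $b\in L^{\infty}(\Omega)$, I obtain a uniform bound on the singular integrals $\int_{\Omega} h(g(v_n))^{1-\gamma}$. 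Fatou's lemma applied to the non-negative sequence $h(g(v_n))^{1-\gamma}$ then yields $\int_{\Omega} h(g(v))^{1-\gamma}<\infty$. Since $h>0$ a.e.\ on $\Omega$ and $1-\gamma<0$, this finiteness forces $v>0$ a.e.; in particular $v\in V_{+}$ and $v$ satisfies \eqref{E1}.

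For the lower semicontinuity at $v$: weak lower semicontinuity of $\|\cdot\|^{2}$, Fatou's lemma on the positive singular term (it enters $\Phi$ with coefficient $\tfrac{1}{\gamma-1}>0$), and a generalized dominated convergence argument giving $\int_{\Omega} b(x)(g(v_n))^{p+1}\to \int_{\Omega} b(x)(g(v))^{p+1}$ (from $|g(v_n)|^{p+1}\le v_n^{p+1}$, a.e.\ convergence, and $v_n\to v$ in $L^{p+1}(\Omega)$) together yield $\Phi(v)\le \liminf \Phi(v_n)=J_2$. Finally, Lemma \ref{L5} furnishes $t(v)>0$ with $\tilde v:=t(v)v\in \mathcal{N}_2$ and $\phi_v(t(v))=\min_{t>0}\phi_v(t)$, so
\[
\Phi(\tilde v)=\phi_v(t(v))\le \phi_v(1)=\Phi(v)\le J_2,
\]
while $\tilde v\in \mathcal{N}_2$ forces $\Phi(\tilde v)\ge J_2$. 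Hence $\Phi(\tilde v)=J_1=J_2$.

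The main obstacle is that the singular integral $\int_{\Omega} h (g(\cdot))^{1-\gamma}$ is not continuous under weak convergence and could, in principle, blow up at the weak limit. The rescue is that this term carries a positive coefficient and a non-negative integrand, so Fatou's lemma provides lower semicontinuity in the correct direction. The most delicate consequence, which I would expect to require the most care in the writeup, is the upgrade of $v\ge 0$ to $v>0$ a.e.; without this one cannot invoke Lemma \ref{L5} to project $v$ onto $\mathcal{N}_2$, and the whole argument collapses.
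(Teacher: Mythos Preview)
Your proof is correct and follows essentially the same strategy as the paper's: coercivity gives a bounded minimizing sequence, Fatou's lemma controls the singular term and forces $v>0$ a.e., weak lower semicontinuity yields $\Phi(v)\le J$, and the fiber-map projection of Lemma \ref{L5} closes the argument. The only organizational difference is that the paper minimizes over $\mathcal{N}_1$ rather than $\mathcal{N}_2$ and extracts $J_1=J_2$ together with $v\in\mathcal{N}_2$ simultaneously from the sandwich $J_1\ge\phi_v(1)\ge\phi_v(t(v))\ge J_2\ge J_1$ (so the weak limit $v$ itself, not just $t(v)v$, lands in $\mathcal{N}_2$), whereas you prove $J_1=J_2$ separately first and then produce $\tilde v=t(v)v$; both are valid and yield the lemma.
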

\begin{proof}
	By Lemma \ref{LL1} there exists a sequence $\left\{v_{n}\right\}\subset \mathcal{N}_{1}$ such that $\Phi(v_{n})\longrightarrow J_{1},$ and may assume that $\left\{v_{n}\right\}$ is bounded and exist $v \in H_{0}^{1}(\Omega)$ such that
	$$
	\left\{
	\begin{array}{l}
	v_{n}\rightharpoonup v~\mbox{in}~H_{0}^{1}(\Omega),\\
	v_{n}\longrightarrow v~\mbox{in}~L^{s}(\Omega)~\mbox{for all}~s\in (0,2^{\ast}),\\
	v_{n}\longrightarrow v~a.s.~\Omega.\\
	\end{array}
	\right.
	$$
	
	Since that $v_{n}>0$ follows that $v\geq 0$ in $\Omega$. Moreover, there exists a constant $C>0$ such that $||v_{n}||\leq C$ for every $n\in \mathbb{N}$. By definition of $\mathcal{N}_{1}$ and Lemma \ref{L1} $(3),(4),(5)$ we have that
	
	$$
	\begin{array}{rl}
	\dfrac{1}{2} & \displaystyle\int_{\Omega} h(x)(g(v_{n}))^{1-\gamma}  \leq \displaystyle\int_{\Omega} h(x)(g(v_{n}))^{-\gamma}g^{\prime}(v_{n})v_{n} \\
	& \leq  ||v_{n}||^{2}-\displaystyle\int_{\Omega} b(x)(g(v_{n}))^{p}g^{\prime}(v_{n})v_{n}\\
	& \leq  ||v_{n}||^{2}+\displaystyle\int_{\Omega} |b(x)||v_{n}|^{p+1}\\ 
	& \leq  ||v_{n}||^{2}+c||v_{n}||^{p+1} \leq  C^{2}+cC^{p+1}:=C,
	\end{array}
	$$
	where we used Sobolev embedding. Therefore using the Fatou's lemma in the last inequality we have that $\displaystyle\int_{\Omega} \theta(x)\leq C<\infty,$
	where 
	$$
	\theta(x)=\left\{
	\begin{array}{ccc}
	h(x)(g(v(x)))^{1-\gamma}, & \mbox{if} & v(x)\neq 0 \\
	\infty, & \mbox{if} & v(x)=0.\\
	\end{array}
	\right.
	$$
	Since that $g(0)=0$ (by Lemma \ref{L1} $(2)$) and $\displaystyle\int_{\Omega} \theta(x)<\infty$ follows that $v>0$ in $\Omega$. Once again by Fatou's lemma we obtain
	$$0<\int_{\Omega} h(x)(g(v))^{-\gamma}g^{\prime}(v)v\leq C< \infty.$$
	
	Consequently by the Lemma \ref{L3} and Lemma \ref{L5} there exists $t(v)>0$ such that $\phi_{v}(t(v))=\displaystyle \inf_{t>0} \phi_{v}(t)$ and $t(v)v\in \mathcal{N}_{2}$. Taking advantage of this information it follows that
	$$
	\begin{array}{l}
		J_{1} =  \displaystyle\lim_{n\to \infty} \Phi(v_{n})=  \displaystyle \liminf_{n\to \infty} \Phi(v_{n})\\
	                      =  \displaystyle \liminf_{n\to \infty} \left[ \displaystyle\frac{1}{2}\int_{\Omega} |\nabla v_{n}|^{2}+\frac{1}{\gamma-1}\int_{\Omega} h(x)g(v_{n})^{1-\gamma}-\frac{1}{p+1}\int_{\Omega} b(x)(g(v_{n}))^{p+1}\right]\\
	                    \geq  \displaystyle \liminf_{n\to \infty} \left[\displaystyle\frac{1}{2}\int_{\Omega} |\nabla v_{n}|^{2}\right]+\displaystyle \liminf_{n\to \infty} \left[\frac{1}{\gamma-1}\int_{\Omega} h(x)(g(v_{n}))^{1-\gamma}\right]-\frac{1}{p+1}\int_{\Omega}b(x) (g(v))^{p+1}\\
                     	\geq  \displaystyle\frac{1}{2}\int_{\Omega} |\nabla v|^{2}+\frac{1}{\gamma-1}\int_{\Omega} h(x)(g(v))^{1-\gamma}-\frac{1}{p+1}\int_{\Omega} b(x)(g(v))^{p+1}=\phi_{v}(1)\\
	                    \geq  \phi_{v}(t(v))=\Phi(t(v)v)\geq J_{2}\geq J_{1},
	                  \end{array}
                     	$$
	which implies that
	$$J_{1}=\phi_{v}(1)=\Phi(v)=J_{2},$$
	and $\phi_{v}(1)=\phi_{v}(t(v))= \displaystyle\inf_{t>0}\phi_{v}(t)$.  Hence $\phi^{\prime}_{v}(1)=0$ and consequently $v\in \mathcal{N}_{2}\subset \mathcal{N}_{1}$.
\end{proof}
Now let us prove the Theorem \ref{T1}.
\begin{proof} Firstly we will to prove $a)$. Suppose that $(P)$ has a solution $v_ {0}$. Taking $v_{0}$ as test function is easy to see that \eqref{C} is satisfied.
	
	 Now assume that \eqref{C} is satisfied. Let $v$ as in the Lemma \ref{LL2}. Let us prove that $v$ is a solution of problem $(P_{A})$. Consider $\varphi \in H_{0}^{1}(\Omega)$ such that $\varphi\geq 0$ in $\Omega$ and let $\epsilon>0$. By the Lemma \ref{L1} $(10)$
	$$\int_{\Omega}h(x) (g(v+\epsilon \varphi))^{1-\gamma}\leq \int_{\Omega} h(x)(g(v))^{1-\gamma}<\infty,$$
	and consequently by Lemmas \ref{L3}, \ref{L5} there exist $t(\epsilon)>0$ such that $\phi_{v+\epsilon \varphi}(t(\epsilon))	=\displaystyle\inf_{t>0}\phi_{v+\epsilon \varphi}(t)$ and $t(\epsilon)(v+\epsilon \varphi)\in \mathcal{N}_{2}$. Therefore
	$$\Phi(v+\epsilon\varphi)=\phi_{v+\epsilon \varphi}(1)\geq \phi_{v+\epsilon \varphi}(t(\epsilon))=\Phi(t(\epsilon)(v+\epsilon\varphi))\geq J_{2}=\Phi(v),$$
which implies that
	\begin{align}
\nonumber	&\int_{\Omega}\frac{h(x)(g(v+\epsilon \varphi))^{1-\gamma}-h(x)(g(v))^{1-\gamma}}{1-\gamma}\\
\nonumber	&\leq \frac{||v+\epsilon \varphi ||^{2}-||v||^{2}}{2}-\int_{\Omega}\frac{b(x)(g(v+\epsilon \varphi))^{p+1}-b(x)(g(v))^{p+1}}{p+1}.
	\end{align}
	
	Thus, dividing the last inequality by $\epsilon>0$ and passing to the $\liminf$ as $\epsilon\longrightarrow 0$, by Fatou's Lemma we have
	\begin{align}\label{110}
	\nonumber\int_{\Omega} h(x)(g(v))^{-\gamma}g^{\prime}(v)\varphi=& \int_{\Omega} \liminf \frac{h(x)(g(v+\epsilon \varphi))^{1-\gamma}-h(x)(g(v))^{1-\gamma}}{1-\gamma} \\
	\leq & \int_{\Omega} \nabla v \nabla \varphi -\int_{\Omega} b(x)(g(v))^{p}g^{\prime}(v)\varphi.~~~~~~~~~~~~~~
	\end{align}
	
	To end the proof of item $a)$ we will use an argument inspired by Graham-Eagle \cite{GE}. Since that $v\in \mathcal{N}_{2}$ we have	
	$$||v||^{2} - \displaystyle\int_{\Omega} b(x) (g(v))^{p}g^{\prime}(v)v-\int_{\Omega} h(x)(g(v))^{-\gamma}g^{\prime}(v)v=0.$$	
	
	For $\varphi\in H_{0}^{1}(\Omega)$ and $\epsilon>0$ define $\Psi=(v+\epsilon \varphi)^{+}$. Put $$\Omega_{1}^{\epsilon}=\left\{x\in \Omega:b(x)< 0 ~\mbox{and}~v(x)+\epsilon \varphi(x)<0 \right\}.$$ 
	
	Taking $\Psi$ as a test function in \eqref{110} we have	
	$$
	\begin{array}{l}
	0\leq  \displaystyle\int_{\Omega} \nabla v \nabla \Psi -\int_{\Omega} b(x)(g(v))^{p}g^{\prime}(v)\Psi-\int_{\Omega} h(x)(g(v))^{-\gamma}g^{\prime}(v)\Psi \\
	
	=  \displaystyle\int_{\left[v+\epsilon \varphi\geq 0\right]}\nabla v \nabla (v+\epsilon \varphi) - b(x)(g(v))^{p}g^{\prime}(v)(v+\epsilon \varphi)- h(x)(g(v))^{-\gamma}g^{\prime}(v)(v+\epsilon \varphi)  \\
	
	 =  \left(\displaystyle\int_{\Omega}-\int_{\left[v+\epsilon \varphi< 0\right]}\right)\nabla v \nabla (v+\epsilon \varphi) - b(x)(g(v))^{p}g^{\prime}(v)(v+\epsilon \varphi)- h(x)(g(v))^{-\gamma}g^{\prime}(v)(v+\epsilon \varphi)\\
	
	=  ||v||^{2} -\displaystyle\int_{\Omega} b(x) (g(v))^{p}g^{\prime}(v)v-\int_{\Omega} h(x)(g(v))^{-\gamma}g^{\prime}(v)v\\
	
	 +\epsilon \left[\displaystyle\int_{\Omega }\nabla v \nabla \varphi - b(x)(g(v))^{p}g^{\prime}(v) \varphi- h(x)(g(v))^{-\gamma}g^{\prime}(v)\varphi\right]\\
	
	 -\displaystyle\int_{\left[v+\epsilon \varphi< 0\right]}\nabla v \nabla (v+\epsilon \varphi) - b(x)(g(v))^{p}g^{\prime}(v)(v+\epsilon \varphi)- h(x)(g(v))^{-\gamma}g^{\prime}(v)(v+\epsilon \varphi)\\
	
	\leq \epsilon \left[\displaystyle\int_{\Omega }\nabla v \nabla \varphi - b(x)(g(v))^{p}g^{\prime}(v) \varphi- h(x)(g(u))^{-\gamma}g^{\prime}(v)\varphi\right]\\
	
	 -\epsilon \displaystyle\int_{\left[v+\epsilon \varphi< 0\right]}\nabla v \nabla\varphi +\epsilon\displaystyle\int_{\Omega_{1}^{\epsilon}}b(x)(g(v))^{p}g^{\prime}(v) \varphi.
	\end{array}	
	$$	
	Since the measures of the domains of integration $\left[v+\epsilon \varphi< 0\right]$ and $\Omega_{1}^{\epsilon}$ tends to zero as $\epsilon \rightarrow 0$, we then divide the last expression above by $\epsilon>0$ to obtain
	$$0\leq \int_{\Omega }\nabla v \nabla \varphi - b(x)(g(v))^{p}g^{\prime}(v) \varphi- h(x)(g(v))^{-\gamma}g^{\prime}(v)\varphi,$$
	as $\epsilon \rightarrow 0$. Replacing $\varphi$ by $-\varphi$ we conclude:
	$$ \int_{\Omega }\nabla v \nabla \varphi - b(x)(g(v))^{p}g^{\prime}(v) \varphi- h(x)(g(v))^{-\gamma}g^{\prime}(v)\varphi=0,~\forall \varphi \in H_{0}^{1}(\Omega),$$
	and therefore $v$ is a solution of $(P_{A})$. 
	
	Defining $u=g(v)$ we have that $u$ is a solution of problem $(P)$.

Now, let us prove $b)$. Suppose that $v$ is a solution of the problem $(P_{A})$. We will show that $v\in C^{1,\alpha}(\overline{\Omega})$. Hence as $g\in C^{\infty}$ we obtain that $u=g(v)\in C^{1,\alpha}(\overline{\Omega})$. 
Note that the function $v$ satisfies in the sense of distributions,
\begin{equation*}
\arraycolsep=1pt 
\medmuskip = 4mu 
\left\{ {\begin{array}{*{20}{rcl}}
-\Delta v & \geq & 0~\mbox{in}~\Omega\\
v & \geq & 0~~\mbox{in}~~\Omega.\\
\end{array}}\right.
\end{equation*}

Since that $v\in H_{0}^{1}(\Omega)$ and $v\not\equiv 0$ by Lemma \ref{BN} there exists $\epsilon>0$ such that
$$v(x)\geq \epsilon d(x,\partial \Omega),~x\in \Omega.$$

Consider $\epsilon> 0$ such that 
\begin{equation}\label{12}
\epsilon d(x,\partial \Omega) < 1,
\end{equation}
for all $x\in \Omega$. Then by (\ref{D}) and Lemma \ref{L1} $(3),(8), (10)$ there exist constants $c,C>0$ and $\beta \in (0,1)$ such that
\begin{align}\label{111}
\nonumber|h(x)(g(v))^{-\gamma}g^{\prime}(v)| & \leq  h(x)(g(\epsilon d(x,\partial \Omega)))^{-\gamma} \leq h(x)C(\epsilon d(x,\partial \Omega))^{-\gamma}\\
\nonumber& \leq C c d^{\gamma-\beta}(x,\partial \Omega)d^{-\gamma}(x,\partial \Omega) \\
&= C d^{-\beta}(x,\partial \Omega)\leq C \phi_{1}^{-\beta}(x)
\end{align}
for $x\in \Omega$, and since $\beta \in (0,1)$ follows that $h(g(v))^{-\gamma}g^{\prime}(v)\in L^{1}(\Omega)$. By Lemma \ref{Ht} there exists a solution $\Psi_{1}\in C^{1,\alpha_{1}}(\overline{\Omega})$, for some $\alpha_{1}\in (0,1)$ of the problem
$$
\left\{
\arraycolsep=1pt
\medmuskip = 4mu
\begin{array}{rl}
-\Delta w & = h(x)(g(v))^{-\gamma}g^{\prime}(v)~\mbox{in}~\Omega,\\
w & > 0 ~\mbox{in}~ \Omega \quad w=0, ~\mbox{on}~ \partial \Omega.\\
\end{array}
\right.
$$

Now, let us prove that the problem
\begin{equation}\label{15}
\left\{
\arraycolsep=1pt
\medmuskip = 4mu
\begin{array}{rl}
-\Delta w & =b(x)(g(v))^{p}g^{\prime}(v)~\mbox{in}~\Omega,\\
w & > 0 ~\mbox{in}~ \Omega \quad w=0, ~\mbox{on}~ \partial \Omega,\\
\end{array}
\right.
\end{equation}
has a unique solution $\Psi_{2}\in C^{1,\alpha_{2}}(\overline{\Omega})$ for some $\alpha_{2} \in (0,1)$. 

To do this, let $\delta:=1-p\in (0,1)$ and note that from \eqref{12} and Lemma \ref{L1} $(8),(12)$ we have
$$|b(x)g^{p}(v(x))g^{\prime}(v(x))|\leq||b||_{\infty}g^{-\delta}(v(x))(g(v(x))g^{\prime}(v(x)))\leq C \phi_{1}^{-\delta}(x),$$
that is
\begin{equation}\label{14}
|b(x)g^{p}(v(x))g^{\prime}(v(x))|\leq C \phi_{1}^{-\delta}(x),
\end{equation}
for every $x\in \Omega$ and some constant $C>0$. Therefore, by Lemma \ref{Ht} we have that the problem \eqref{15} has a unique solution $\Psi_{2}\in C^{1,\alpha_{2}}(\overline{\Omega})$ for some $\alpha_{2} \in (0,1)$. 

The existence of $\Psi_{1}$ and $\Psi_{2}$ and the fact that $v$ is a solution of $(P_{A})$ implies that
$$\int_{\Omega} \nabla v \nabla \varphi=\int_{\Omega} \left[h(x)(g(v))^{-\gamma}g^{\prime}(v)+b(x)(g(v))^{p}g^{\prime}(v)\right]\varphi=\int_{\Omega} \nabla(\Psi_{1}+\Psi_{2})\nabla \varphi,$$
for every $\varphi \in H_{0}^{1}(\Omega)$ and therefore $v=\Psi_{1}+\Psi_{2}$, which implies that $v\in C^{1,\alpha}(\overline{\Omega})$, where $\alpha:=\min \left\{\alpha_{1},\alpha_{2}\right\}\in (0,1)$.

Now, suppose that $v_{1},v_{2}$ are solutions of the problem $(P_{A})$. Let $\psi_{1}(x,s)=\psi_{2}(x,s):=h(x)(g(s))^{-\gamma}g^{\prime}(s)+b(x)(g(s))^{p}g^{\prime}(s)$. By Lemma \ref{L1} $(9),(10),(11)$ follows that for each $x\in \Omega$ the function $s \longmapsto \psi_{j}(x,s)s^{-1},j=1,2,$ is decreasing on $(0,\infty)$. Moreover by \eqref{111}
$$0\leq \psi_{j}(v_{i})\leq C\phi_{1}^{-\beta}(x)+b(x)(g(v_{i}(x)))^{p}g^{\prime}(v_{i}(x)),~x\in \Omega,$$
and therefore $\psi_{j}(x,v_{i})\in L^{1}(\Omega),j=1,2,i=1,2$. We apply the Lemma \ref{M} with $u=v_{1}$ and $v=v_{2}$ to conclude that $v_{1}\leq v_{2}$ in $\Omega$. Once more applying the Lemma \ref{M} with $u=v_{2}$ and $v=v_{1}$ we have that $v_{2}\leq v_{1}$ in $\Omega$. Therefore $v_{1}=v_{2}$ and the uniqueness follows.
\end{proof}

Let us end this section by making some remarks.
\begin{remark}\begin{itemize}
		\item[a)] If \eqref{D} is satisfied, then problem $(P)$ has solution. In fact we have $h|\phi_{1}|^{1-\gamma}\leq c|\phi_{1}|^{1-\beta}\in L^{1}(\Omega)$ and by the item $a)$ of the Theorem \ref{T1} the problem $(P)$ has solution.
		\item[b)] Lazer-Mckenna \cite{LM} proved that the solution of the following semilinear problem
			$$
			\left\{
			\arraycolsep=1pt
			\medmuskip = 4mu
			\begin{array}{rl}
			-\Delta u & = h(x)u^{-\gamma}~\mbox{in}~\Omega,\\
			u>0&\mbox{in}~\Omega,~u=0~\mbox{on}~\partial \Omega,
			\end{array}
			\right.
			$$
			does not belong to $C^{1}(\overline{\Omega})$, if $0<h\in C^{\alpha}(\overline{\Omega}),\alpha \in (0,1)$ and $1<\gamma$. Note that in this case $\displaystyle \inf_{\Omega}h>0$. When $b\equiv 0$ the proof of item $b)$ of Theorem \ref{T1} applies to the semilinear case also, showing that the solution of these problem belong to $C^{1}(\overline{\Omega})$, if the assumption (\ref{D}) is satisfied. Note that in this case $\displaystyle\inf_{\Omega} h=0$, unlike of the case in \cite{LM}.
	\end{itemize}
\end{remark}
\section{Proof of Theorem.2.}
In this section we assume that $f(x,s)=-b(x)s^{22^{\ast}-1}$ with $0\leq b\in L^{\infty}(\Omega), b\not\equiv 0$. Since that the embedding $H_{0}^{1}(\Omega)\hookrightarrow L^{2^{\ast}}(\Omega)$ is not compact, the proof of the Theorem \ref{T2} can not be applied directly. To cover this difficulty we use the Brezis-Lieb Theorem (see \cite{BL}). Now the functional associated with the problem $(P_{A})$ is
$$\Phi(v)=\frac{1}{2}\int_{\Omega} |\nabla v|^{2}+\frac{1}{\gamma-1}\int_{\Omega} h(x)(g(v))^{1-\gamma}+\frac{1}{22^{\ast}}\int_{\Omega} b(x)(g(v))^{22^{\ast}}.$$

If \eqref{C} is satisfied, using the Lemmas \ref{L2}, \ref{L5} we have $\mathcal{N}_{1}\neq \emptyset$. So it follows that.
\begin{lemma}\label{LLL1}
	The functional $\Phi$ is coercive in $\mathcal{N}_{1}$
\end{lemma}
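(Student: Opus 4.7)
The plan is to exploit the fact that, in the critical case, every term of $\Phi$ has a favorable sign on $V_+$. Under $(f)_2$ one has $F(x,s)=-\frac{b(x)}{22^{\ast}}s^{22^{\ast}}$ with $b\ge 0$, so $-\int_\Omega F(x,g(v))=\frac{1}{22^{\ast}}\int_\Omega b(x)(g(v))^{22^{\ast}}\ge 0$ — the critical nonlinearity is defocusing, contributing to rather than consuming the kinetic energy (which is the crucial contrast with the sublinear case handled in Lemma \ref{LL1}). Likewise, for $v\in V_+$, Lemma \ref{L1}$(2),(3)$ gives $g(v)\ge 0$; together with $h>0$, $\gamma>1$ (so $\frac{1}{\gamma-1}>0$) and $(g(v))^{1-\gamma}\ge 0$, the singular term is also non-negative.

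First I would record the explicit form of the functional recalled just before the statement,
$$\Phi(v)=\frac{1}{2}\|v\|^{2}+\frac{1}{\gamma-1}\int_{\Omega} h(x)(g(v))^{1-\gamma}+\frac{1}{22^{\ast}}\int_{\Omega} b(x)(g(v))^{22^{\ast}},$$
and check that on $\mathcal{N}_1$ all three integrals are well-defined and finite: the first is obviously so, the second by the equivalences $(b)\Leftrightarrow (c)$ of Lemma \ref{L2} applied to the defining inequality of $\mathcal{N}_1$, and the third by Lemma \ref{L1}$(5)$ together with the Sobolev embedding $H^1_0(\Omega)\hookrightarrow L^{22^{\ast}}(\Omega)$ (since $|g(v)|^{22^{\ast}}\le K_0^{22^{\ast}}|v|^{2^{\ast}}$ by Lemma \ref{L1}$(6)$).

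Then the key step is the one-line bound
$$\Phi(v)\ \ge\ \tfrac{1}{2}\|v\|^{2}\qquad \text{for every } v\in \mathcal{N}_1,$$
obtained just by discarding the two non-negative integrals. Consequently $\Phi(v_n)\to\infty$ whenever $\|v_n\|\to\infty$ along any sequence in $\mathcal{N}_1$, which is precisely coercivity.

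I do not expect any obstacle: unlike Lemma \ref{LL1}, we need not absorb a potentially destabilising nonlinear term, and we do not even use the $\mathcal{N}_1$-constraint in the final estimate — the bound $\Phi\ge \tfrac12\|\cdot\|^2$ in fact holds on the whole effective domain $D(\Phi)\cap V_+$. The only mild care is in justifying that the singular integral is a well-defined element of $[0,\infty]$, which is automatic because it is the integral of a non-negative measurable function.
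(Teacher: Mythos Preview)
Your proof is correct and follows exactly the paper's one-line argument: drop the two non-negative integrals to obtain $\Phi(v)\ge \tfrac12\|v\|^2$ on $\mathcal{N}_1$, hence coercivity. One small slip: the Sobolev embedding you invoke should read $H_0^1(\Omega)\hookrightarrow L^{2^\ast}(\Omega)$, not $L^{22^\ast}(\Omega)$; your parenthetical inequality $|g(v)|^{22^\ast}\le K_0^{22^\ast}|v|^{2^\ast}$ already reflects the correct exponent.
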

\begin{proof}
	Indeed, for every $v\in \mathcal{N}_{1}$, we have $\Phi(v)\geq \dfrac{||v||^{2}}{2},$ and hence $\Phi$ is coercive.
\end{proof}
Let us prove the Theorem \ref{T2}.
\begin{proof}\textbf{(Theorem \ref{T2})}  Suppose that $(P)$ has a solution $v_ {0}$. Taking $v_{0}$ as test function is easy to see that \eqref{C} is satisfied.

Now assume that \eqref{C} is satisfied. Define $J_1 = \displaystyle\inf_{v \in \mathcal{N}_1} \Phi(v)$ and let $\left\{v_{n}\right\}\subset \mathcal{N}_{1}$ such that $\Phi(v_{n}) \longrightarrow J_{1}.$

	By Lemma \ref{LLL1}, $J_{1}\in \mathbb{R}$ and we may assume that $\left\{v_{n}\right\}$ is bounded in $H_{0}^{1}(\Omega)$ and in $L^{2^{\ast}}(\Omega)$ and there exists $v\in H_{0}^{1}(\Omega)$ such that 
		$$
	\left\{
	\begin{array}{l}
	 v_{n}\rightharpoonup v~\mbox{in}~H_{0}^{1}(\Omega),\\
	 v_{n}\longrightarrow v~\mbox{in}~L^{s}(\Omega)~\mbox{for all}~s\in (0,2^{\ast}),\\
	 v_{n}\longrightarrow v~a.s.~\Omega.\\
	\end{array}
	\right.
	$$
	
	Similarly to the Theorem \ref{T1} we may show that $\displaystyle\int_{\Omega} h(x)(g(v))^{-\gamma}g^{\prime}(v)v<\infty$ and consequently there exists $t(u)>0$ such that $t(v)v\in \mathcal{N}_{2}$.
	By Lemma \ref{L1} $(6)$ there exists a constant $C>0$ such that
	$$\int_{\Omega} b(x)(g(v_{n}))^{22^{\ast}}=\int_{\Omega} \left[ b^{\frac{1}{2^{\ast}}}\right]^{2^{\ast}}\left[(g(v_{n}))^{2}\right]^{2^{\ast}}\leq ||b||_{\infty}K_{0}^{22^{\ast}}\int_{\Omega} |v_{n}|^{2^{\ast}}\leq C. $$
	
	Moreover $b(x)(g(v_{n}))^{22^{\ast}}\longrightarrow b(x)(g(v))^{22^{\ast}}$ a.s. in $\Omega$. Hence by Brezis-Lieb Theorem (see \cite{BL})
	\begin{equation}\label{5}
     \begin{array}{rl}
      \displaystyle\int_{\Omega} b(x)(g(v_{n}))^{22^{\ast}} & =  \displaystyle\int_{\Omega} b(x)(g(v))^{22^{\ast}}+\int_{\Omega} b(x)|(g(v_{n}))^{22^{\ast}}-(g(v))^{22^{\ast}}|+o(1)\\
       & \geq  \displaystyle\int_{\Omega} b(x)(g(v))^{22^{\ast}}+o(1).
	\end{array}
	\end{equation}
Using the inequality (\ref{5}) and the Fatou's lemma we get
$$
\begin{array}{rl}
J_{1} & =  \lim \Phi(v_{n})\\
& =\liminf \left[\frac{1}{2}\displaystyle\int_{\Omega} |\nabla v_{n}|^{2}+\frac{1}{\gamma-1}\int_{\Omega} h(x)(g(v_{n}))^{1-\gamma}+\frac{1}{22^{\ast}}\int_{\Omega} b(x)(g(v_{n}))^{22^{\ast}}\right]\\
& \geq \frac{1}{2}\displaystyle\int_{\Omega} |\nabla v|^{2}+\frac{1}{\gamma-1}\int_{\Omega} h(x)(g(v))^{1-\gamma} + \frac{1}{22^{\ast}}\int_{\Omega} b(x)(g(v))^{22^{\ast}}\\
& =\phi_{v}(1)\geq \phi_{v}(t(v))=\Phi(t(v)v) \geq J_{2}\geq J_{1},
\end{array}
$$
that is 
	$$J_{1}=\phi_{v}(1)=\Phi(v)=J_{2},$$
	and $1$ is a critical point of $\phi_{v}$. Therefore $v\in \mathcal{N}_{2}$ and
	$$||v||^{2} +\int_{\Omega} b(x) (g(v))^{22^{\ast}-1}g^{\prime}(v)v-\int_{\Omega} h(x)(g(v))^{-\gamma}g^{\prime}(v)v=0.$$
	
	Let $\varphi \in H_{0}^{1}(\Omega), \varphi\geq 0$ and $\epsilon >0$. Then similarly to the Theorem \ref{T1} we have $h(\cdotp)(g(v+\epsilon \varphi))^{1-\gamma}\in L^{1}(\Omega)$ and therefore, by Lemmas \ref{L3},\ref{L5} there exists $t(\epsilon)>0$ such that 
	$\phi_{v+\epsilon \varphi}(t(\epsilon))=\displaystyle \inf_{t>0}\phi_{v+\epsilon \varphi}(t)$ and
	$t(\epsilon)(v+\epsilon \varphi)\in \mathcal{N}_{2}$. 
	
	Since that 
	$$\Phi(v+\epsilon \varphi)=\phi_{v+\epsilon \varphi}(1)\geq \phi_{v+\epsilon \varphi}(t(\epsilon))\geq \phi_{v}(1)=\Phi(v),$$ 
	again, similar to the Theorem \ref{T1} we may show that 
	\begin{align*}
	\int_{\Omega} h(x)(g(v))^{-\gamma}g^{\prime}(v)\varphi
	\leq \int_{\Omega} \nabla v \nabla \varphi +\int_{\Omega} b(x)(g(v))^{22^{\ast}-1}g^{\prime}(v)\varphi,~~~~~~~~~~~~~~
	\end{align*}	
	for every $\varphi\geq 0$ and 	
	$$
	\arraycolsep=1pt
        \medmuskip = 4mu
	\begin{array}{rl}                                                                                                    0 \leq & ||v||^{2} +\displaystyle\int_{\Omega} b(x) (g(v))^{22^{\ast}-1}g^{\prime}(v)v-\int_{\Omega} h(x)(g(v))^{-\gamma}g^{\prime}(v)v\\
	 & +\epsilon \left[\displaystyle\int_{\Omega }\nabla v \nabla \varphi + b(x)(g(v))^{22^{\ast}-1}g^{\prime}(v) \varphi- h(x)(g(v))^{-\gamma}g^{\prime}(v)\varphi\right]\\
         & -\displaystyle\int_{\left[v+\epsilon \varphi< 0\right]}\!\!\!\!\!\!\!\!\!\!\!\!\!\!\!\!\! \nabla v \nabla (v+\epsilon \varphi) + b(x)(g(v))^{22^{\ast}-1}g^{\prime}(v)(v+\epsilon \varphi)- h(x)(g(v))^{-\gamma}g^{\prime}(v)(v+\epsilon \varphi)\\
	  \leq & \epsilon \left[\displaystyle\int_{\Omega }\nabla v \nabla \varphi + b(x)(g(v))^{22^{\ast}-1}g^{\prime}(v) \varphi- h(x)(g(v))^{-\gamma}g^{\prime}(v)\varphi\right]\\
	 & -\epsilon \displaystyle\int_{\left[v+\epsilon \varphi< 0\right]} \!\!\!\!\!\!\!\!\!\! \nabla v \nabla \varphi+b(x)(g(v))^{22^{\ast}-1}g^{\prime}(v)\varphi,
	\end{array}
	$$
	for every $\varphi \in H_{0}^{1}(\Omega)$.
	
	Since the measure of the domain of integration $\left[v+\epsilon \varphi< 0\right]$ tends to zero as $\epsilon \rightarrow 0$, we then divide the last expression above by $\epsilon>0$ to obtain
	$$0\leq \int_{\Omega }\nabla v \nabla \varphi + b(x)(g(v))g^{\prime}(v) \varphi- h(x)(g(v))^{-\gamma}g^{\prime}(v)\varphi,$$
	as $\epsilon \rightarrow 0$. Replacing $\varphi$ by $-\varphi$ we conclude:
	$$ \int_{\Omega }\nabla v \nabla \varphi + b(x)(g(v))^{22^{\ast}-1}g^{\prime}(v) \varphi- h(x)(g(v))^{-\gamma}g^{\prime}(v)\varphi=0,~\forall \varphi \in H_{0}^{1}(\Omega),$$
	and therefore $v$ is a solution of $(P_{A})$. 
	
	Defining $u=g(v)$ we have that $u$ is a solution of problem $(P)$.	
	
	To prove that the solution is unique, let us denote by 
	$$j(x,t)=-b(x)(g(t))^{22^{\ast}-1}g^{\prime}(t)+ h(x)(g(t))^{-\gamma}g^{\prime}(t),$$
	for $x\in \Omega, t>0$. Note that $j(.,t)$ is decreasing by Lemma \ref{L1} $(9),(10)$. Suppose that $v_{1}$ and $v_{2}$ are solutions from $(P_{A})$.  Then,
		$$\rVert v_{1}-v_{2}\rVert^{2}=\displaystyle \int_{\Omega} (j(x,v_{1})-j(x,v_{2}))(v_{1}-v_{2}) <0,$$
		which implies that $v_{1}=v_{2}$. Therefore the solution is unique.
	\end{proof}
\section{Appendix}
In this section we will study the stabilty of the solutions of the following problem with parameter
\begin{equation*} 
(P_{\lambda})\left\{
\begin{array}{l}
-\Delta u -\Delta (u^{2})u=h(x) u^{-\gamma} + \lambda b(x)u^{p}~\mbox{in} ~ \Omega,\\
u> 0~\mbox{in}~ \Omega,~  u(x)=0~\mbox{on}~,\partial \Omega,
\end{array}
\right.
\end{equation*}
where $\lambda \geq 0, 0<p<1$ and $0\lneq b \in L^{\infty}(\Omega)$. The main result of this section is
\begin{theorem}\label{A}
	Let $\lambda\geq 0$. Suppose that (\ref{C}) is satisfied e let $u_{\lambda}\in H_{0}^{1}(\Omega)$ the solution from $(P_{\lambda})$, which there exists by Theorem \ref{T1}. There holds the following:
	\begin{itemize}
		\item [a)] $u_{\lambda}\geq u_{0}$ for all $\lambda >0$, 
		\item [b)] $u_{\lambda}\longrightarrow u_{0}$ in $H_{0}^{1}(\Omega)$ when $\lambda \longrightarrow 0$.
	\end{itemize}
\end{theorem}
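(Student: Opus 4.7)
The plan is to work throughout with the dual problem $(P_{A,\lambda})$, writing $v_\lambda \in H_0^1(\Omega)$ for the minimizer of the associated energy $\Phi_\lambda$ on $\mathcal{N}_{1,\lambda}$ produced by the argument of Theorem \ref{T1}, so that $u_\lambda = g(v_\lambda)$. For item $a)$ I would run a direct weak comparison at the level of the dual equations. Testing the weak formulations of $(P_{A,0})$ and $(P_{A,\lambda})$ with $\varphi = (v_0 - v_\lambda)^+ \in H_0^1(\Omega)$ and subtracting yields
\begin{equation*}
\int_\Omega |\nabla (v_0 - v_\lambda)^+|^2 = \int_\Omega \bigl[h(g(v_0))^{-\gamma}g'(v_0) - h(g(v_\lambda))^{-\gamma}g'(v_\lambda)\bigr](v_0 - v_\lambda)^+ - \lambda \int_\Omega b(g(v_\lambda))^p g'(v_\lambda)(v_0 - v_\lambda)^+.
\end{equation*}
Both terms on the right are non-positive: on $\{v_0 > v_\lambda\}$ the map $s \mapsto (g(s))^{-\gamma}g'(s)$ is strictly decreasing on $(0,\infty)$ by Lemma \ref{L1}$(9),(10)$, while the second integrand is non-negative and enters with a minus sign. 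Hence $(v_0 - v_\lambda)^+ \equiv 0$, i.e.\ $v_0 \leq v_\lambda$ a.e.\ in $\Omega$, so $u_0 = g(v_0) \leq g(v_\lambda) = u_\lambda$ by monotonicity of $g$.

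For item $b)$ fix $\lambda_n \to 0^+$ and set $v_n := v_{\lambda_n}$. The first step is a uniform $H_0^1$-bound. Applying Lemma \ref{L5} to $v_0$ with the $\lambda_n$-problem (note that compatibility (\ref{C}) does not depend on $\lambda$) produces $t_n > 0$ with $t_n v_0 \in \mathcal{N}_{2,\lambda_n}\subset\mathcal{N}_{1,\lambda_n}$; since $v_n$ minimizes $\Phi_{\lambda_n}$ on $\mathcal{N}_{1,\lambda_n}$ and $t_n$ is a minimum of the fiber map $\phi_{v_0,\lambda_n}$,
\begin{equation*}
\Phi_{\lambda_n}(v_n)\leq \Phi_{\lambda_n}(t_n v_0)\leq \Phi_{\lambda_n}(v_0)=\Phi_{0}(v_0)-\frac{\lambda_n}{p+1}\int_\Omega b(g(v_0))^{p+1}\leq \Phi_{0}(v_0).
\end{equation*}
Coupling this with the uniform coercivity $\Phi_\lambda(v) \geq \tfrac{1}{2}\|v\|^2 - C\lambda \|v\|^{p+1}$, valid for $\lambda \in [0,1]$ by Lemma \ref{L1}$(5)$ and the Sobolev embedding, bounds $\|v_n\|$ independently of $n$. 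Up to a subsequence, $v_n \rightharpoonup w$ in $H_0^1(\Omega)$, $v_n \to w$ in $L^s(\Omega)$ for $s\in[1,2^{\ast})$ and a.e.\ in $\Omega$. Part $a)$ together with a.e.\ convergence gives $w \geq v_0 > 0$ a.e., the crucial pointwise lower envelope that tames the singular term.

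To identify $w = v_0$ and upgrade to strong convergence, fix $\varphi \in H_0^1(\Omega)$ and pass to the limit in the weak form of $(P_{A,\lambda_n})$: the gradient term converges by weak convergence; the sublinear term is negligible since $\lambda_n \to 0$ while Lemma \ref{L1}$(6)$ uniformly controls $(g(v_n))^p g'(v_n)$ in a suitable $L^{s'}$; and the singular integrand is dominated by $h(g(v_0))^{-\gamma}g'(v_0)|\varphi|$ (monotonicity of $(g(\cdot))^{-\gamma}g'(\cdot)$ and $v_n \geq v_0$), which lies in $L^1(\Omega)$ because $v_0$ already solves $(P_{A,0})$, so dominated convergence applies. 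Thus $w$ solves $(P_{A,0})$; a second run of the comparison of $a)$, this time tested with $(w - v_0)^+$, forces $w \leq v_0$, hence $w = v_0$. For norm convergence I exploit $v_n \in \mathcal{N}_{2,\lambda_n}$:
\begin{equation*}
\|v_n\|^2 = \int_\Omega h(g(v_n))^{-\gamma} g'(v_n) v_n + \lambda_n \int_\Omega b(g(v_n))^p g'(v_n) v_n.
\end{equation*}
By Lemma \ref{L1}$(4)$ one has $(g(v_n))^{-\gamma}g'(v_n)v_n \leq (g(v_n))^{1-\gamma}$, which since $\gamma > 1$ and $v_n \geq v_0$ is bounded by $(g(v_0))^{1-\gamma}$, and $h(g(v_0))^{1-\gamma}\in L^1(\Omega)$ by Lemma \ref{L2}; dominated convergence sends the singular integral to $\|v_0\|^2$, while the sublinear integral vanishes. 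Weak convergence plus $\|v_n\| \to \|v_0\|$ in the Hilbert space $H_0^1(\Omega)$ yields $v_n \to v_0$ strongly, and then $u_{\lambda_n}=g(v_{\lambda_n})\to g(v_0)=u_0$ in $H_0^1(\Omega)$ follows by applying dominated convergence to $\nabla u_n = g'(v_n)\nabla v_n$, using $|g'|\leq 1$. The main obstacle throughout is controlling the singular nonlinearity under the passage to the limit; the lower bound $v_\lambda \geq v_0$ obtained in $a)$ is the single ingredient that makes every required domination argument succeed.
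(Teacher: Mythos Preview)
Your proof is correct and follows the paper's overall architecture: both reduce to the dual variables $v_\lambda$, obtain $v_\lambda\ge v_0$ in a) by testing the difference of the equations with $(v_0-v_\lambda)^+$ (equivalently the paper's $(v_\lambda-v_0)^-$) and exploiting that $s\mapsto (g(s))^{-\gamma}g'(s)$ is decreasing, and then in b) use this lower barrier $v_n\ge v_0$ as the single domination device that controls the singular term in every limit passage, before transferring $v_\lambda\to v_0$ to $u_\lambda=g(v_\lambda)\to u_0$ via $|g'|\le1$ and dominated convergence. The only tactical divergences are that the paper bounds $\|v_n\|$ directly from the Nehari identity $\|v_n\|^2=\int h(g(v_n))^{-\gamma}g'(v_n)v_n+\lambda_n\int b(g(v_n))^{p}g'(v_n)v_n$ (using $v_n\ge v_0$ on the first integral) rather than through the energy inequality $\Phi_{\lambda_n}(v_n)\le\Phi_0(v_0)$ plus coercivity, obtains strong convergence by showing $(v_n,v_n-\psi)\to0$ rather than $\|v_n\|\to\|v_0\|$, and invokes uniqueness of the solution to $(D_0)$ where you instead re-run the comparison with $(w-v_0)^+$; your variant is slightly more self-contained on this last point.
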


To prove Theorem \ref{A} firstly we will consider the dual problem associated to $(P_{\lambda})$, that is for $\lambda \geq 0, 0<p<1$ and $0\lneq b \in L^{\infty}(\Omega)$ consider the following family of problems dual to $(P_{\lambda})$
\begin{equation*} 
(D_{\lambda})\left\{
\begin{array}{l}
-\Delta v  =h(x) (g(v))^{-\gamma}g^{\prime}(v) + \lambda b(x)(g(v))^{p}g^{\prime}(v)~\mbox{in} ~ \Omega,\\
v  > 0~\mbox{in}~ \Omega,~~  v(x)=0~\mbox{on}~,\partial \Omega,
\end{array}
\right.
\end{equation*}
and let $\Phi_{\lambda}$ the energy functional associated to $(D_{\lambda})$. For each $\lambda\geq 0$ let us denote by
$$\mathcal{N}_{\lambda}=\left\{v\in V_{+}:||v||^{2}-\int_{\Omega} \lambda b(g(v))^{p}g^{\prime}(v)v\geq \int_{\Omega} h(x)(g(v))^{-\gamma}g^{\prime}(v)v \right\},$$
the constrained set associated to $(D_{\lambda})$.

The proof of Theorem \ref{A} is a consequence of the following lemma.
\begin{lemma}\label{AA}
	Suppose that (\ref{C}) is satisfied and let $v_{\lambda}$ the solution from $(D_{\lambda})$ obtained in the Theorem \ref{T1}. Then
	\begin{itemize}
		\item [a)] $v_{\lambda}\geq v_{0} $, where $v_{0}$ is a unique solution from $(D_{0})$,
		\item [b)] $v_{\lambda}\longrightarrow v_{0}$ in $H_{0}^{1}(\Omega)$ when $\lambda \longrightarrow 0$,
		\item [c)] $\displaystyle\lim_{\lambda \to 0}\Phi_{\lambda}(v_{\lambda})=\Phi_{0}(v_{0})>0$,
		
		\item [d)] if the conditions of Theorem \ref{T1} $b)$ are satisfied, then the function $[0,\infty )\ni \lambda \longmapsto \Phi_{\lambda}(v_{\lambda})$ is continuous and decreasing. 
	\end{itemize}
\end{lemma}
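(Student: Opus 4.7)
My plan is to prove the four items in sequence, since each one builds on its predecessor. The key hypothesis (\ref{C}) enters through Lemma \ref{L2} exactly once—to ensure $\int_\Omega h(g(v_0))^{1-\gamma}<\infty$—and this single fact, combined with the pointwise bound (a), will dominate every singular pairing we encounter.

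For part (a), observe that $v_\lambda$ is automatically a supersolution of $(D_0)$: since $\lambda,b\geq 0$, one has $-\Delta v_\lambda \geq h(x)(g(v_\lambda))^{-\gamma}g'(v_\lambda)$. Setting $\psi(x,s):=h(x)(g(s))^{-\gamma}g'(s)$, Lemma \ref{L1}(3), (9), (10) imply that $s\mapsto \psi(x,s)/s$ is decreasing on $(0,\infty)$, and Lemma \ref{L2} gives $\psi(\cdot,v_0)\in L^1(\Omega)$; Lemma \ref{M} applied with $u=v_0$, $v=v_\lambda$ then yields $v_0\leq v_\lambda$ in $\Omega$. For part (b), I would test the weak forms of $(D_\lambda)$ and $(D_0)$ with $\varphi:=v_\lambda-v_0\geq 0$ (admissible, since Theorem \ref{T1}(a) gives both equations for every $H_0^1$ test function) and subtract:
\begin{align*}
\|v_\lambda-v_0\|^2 &= \int_\Omega h\bigl[(g(v_\lambda))^{-\gamma}g'(v_\lambda)-(g(v_0))^{-\gamma}g'(v_0)\bigr](v_\lambda-v_0)\\
&\quad +\lambda\int_\Omega b(g(v_\lambda))^p g'(v_\lambda)(v_\lambda-v_0).
\end{align*}
The first integrand is $\leq 0$ by (a) and the monotonicity of $(g(\cdot))^{-\gamma}g'(\cdot)$; the second is controlled by $\lambda\|b\|_\infty\int_\Omega v_\lambda^{p+1}$ via Lemma \ref{L1}(3), (5). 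A uniform bound $\|v_\lambda\|\leq R$ for $\lambda\in[0,1]$ comes from testing $(D_\lambda)$ by $v_\lambda$ itself and invoking Lemma \ref{L1}(4) together with (a): $\|v_\lambda\|^2 \leq \int_\Omega h(g(v_0))^{1-\gamma}+\lambda C\|v_\lambda\|^{p+1}$, which closes because $p+1<2$. Hence $\|v_\lambda-v_0\|^2\leq C\lambda\to 0$.

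For part (c), strong convergence from (b) gives $\|v_\lambda\|^2\to\|v_0\|^2$, while $\int h(g(v_\lambda))^{1-\gamma}\to\int h(g(v_0))^{1-\gamma}$ follows by dominated convergence (after a.e.\ extraction), dominated by the integrable $h(g(v_0))^{1-\gamma}$ thanks to (a) and $1-\gamma<0$; the term $\lambda\int b(g(v_\lambda))^{p+1}$ vanishes since the integral is bounded. The positivity $\Phi_0(v_0)>0$ is immediate, both surviving summands being strictly positive. For (d), Theorem \ref{T1}(b) supplies uniqueness, so the argument of (a) applied to arbitrary endpoints gives $\lambda_1<\lambda_2\Rightarrow v_{\lambda_1}\leq v_{\lambda_2}$; continuity of $\lambda\mapsto\Phi_\lambda(v_\lambda)$ at any $\lambda^*$ is obtained by iterating the test-function estimate of (b) and the convergence argument of (c). For strict decrease, fix $\lambda_1<\lambda_2$ and project $v_{\lambda_1}$ to the $\mathcal{N}_2$-analogue for $\lambda_2$ using Lemma \ref{L5}: there is $t^*>0$ such that $t^*v_{\lambda_1}$ minimises $t\mapsto \Phi_{\lambda_2}(tv_{\lambda_1})$ and lies in the constraint set on which $v_{\lambda_2}$ minimises $\Phi_{\lambda_2}$, whence
\begin{align*}
\Phi_{\lambda_2}(v_{\lambda_2})\leq \Phi_{\lambda_2}(t^*v_{\lambda_1})\leq \Phi_{\lambda_2}(v_{\lambda_1})=\Phi_{\lambda_1}(v_{\lambda_1})-\tfrac{\lambda_2-\lambda_1}{p+1}\int_\Omega b(g(v_{\lambda_1}))^{p+1}<\Phi_{\lambda_1}(v_{\lambda_1}),
\end{align*}
the final strict inequality using $b\not\equiv 0$ and $v_{\lambda_1}>0$.

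The main obstacle throughout is handling the singular pairing $\int h(g(v))^{-\gamma}g'(v)\varphi$ for $\varphi$ not of a distinguished form (ground state, $(v+\epsilon\phi)^+$, etc.); every occurrence is tamed by the monotone bound $v_\lambda\geq v_0$ from part (a), which allows it to be dominated by the integrable $h(g(v_0))^{1-\gamma}$ supplied by (\ref{C}).
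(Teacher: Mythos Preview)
Your proof of part (a) via Lemma~\ref{M} has two gaps. First, Lemma~\ref{M} requires the subsolution $u$ to lie in $L^\infty(\Omega)$; under hypothesis~(\ref{C}) alone you have no boundedness for $v_0$ (the $C^{1,\alpha}$ regularity of Theorem~\ref{T1}(b), which would supply it, is assumed only in part (d)). Second, you claim $\psi(\cdot,v_0)=h(g(v_0))^{-\gamma}g'(v_0)\in L^1(\Omega)$ via Lemma~\ref{L2}, but that lemma yields only $h(g(v_0))^{-\gamma}g'(v_0)\,v_0\in L^1(\Omega)$, which is weaker near $\{v_0=0\}$. The paper sidesteps both issues by testing the difference of the two weak equations against $(v_\lambda-v_0)^-\in H_0^1(\Omega)$: on $\{v_\lambda<v_0\}$ the singular contribution has the favourable sign by monotonicity of $t\mapsto(g(t))^{-\gamma}g'(t)$, and one reads off $\|(v_\lambda-v_0)^-\|=0$ directly, with no $L^\infty$ or $L^1$ hypothesis needed.

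Once (a) is repaired, your argument for (b) is actually sharper than the paper's: subtracting the equations and testing with $v_\lambda-v_0\geq 0$ gives the quantitative estimate $\|v_\lambda-v_0\|^2\leq C\lambda$, whereas the paper passes to a weakly convergent subsequence, identifies the limit as a solution of $(D_0)$, and invokes uniqueness of $v_0$. Your route bypasses all of that. Parts (c) and (d) match the paper, including the fiber-map chain $\Phi_\lambda(v_\lambda)>\Phi_\mu(v_\lambda)\geq\Phi_\mu(t_\mu v_\lambda)\geq\Phi_\mu(v_\mu)$ for the decreasing property. One caveat: your direct estimate from (b) does not transfer cleanly to continuity at $\lambda^*>0$, since the sublinear remainder no longer carries a factor $\lambda_n\to 0$; there the paper's subsequence-plus-uniqueness argument is the natural tool, and uniqueness is precisely where the hypothesis of Theorem~\ref{T1}(b) enters.
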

\begin{proof} First we will to prove $a)$. Taking $(v_{\lambda}-v_{0})^{-}=\max\left\{-(v_{\lambda}-v_{0}),0\right\}$ as test function we have
$$
	\arraycolsep=1pt
        \medmuskip = 4mu
	\begin{array}{rl}
	-\rVert (v_{\lambda}-& v_{0})^{-}\rVert^{2} \\
	& = \displaystyle \int_{\Omega} ((g(v_{\lambda}))^{-\gamma}g^{\prime}(v_{\lambda})-(g(v_{0}))^{-\gamma}g^{\prime}(v_{0})+\lambda b(x)(g(v_{\lambda}))^{p}g^{\prime}(v_{\lambda}))(v_{\lambda}-v_{0})^{-}\\
	&\geq \displaystyle \int_{\Omega} ((g(v_{\lambda}))^{-\gamma}g^{\prime}(v_{\lambda})-(g(v_{0}))^{-\gamma}g^{\prime}(v_{0}))(v_{\lambda}-v_{0})^{-}\\
	&=\displaystyle \int_{\left\{v_{\lambda}<v_{0}\right\}} \!\!\!\!\!\! ((g(v_{\lambda}))^{-\gamma}g^{\prime}(v_{\lambda})-(g(v_{0}))^{-\gamma}g^{\prime}(v_{0}))(v_{\lambda}-v_{0})^{-}\geq 0,
	\end{array}
	$$
	where the last inequality is holds because the function $(g(t))^{-\gamma}g^{\prime}(t),t> 0$ is decreasing (see Lemma \ref{L1} $(9),(10)$). As a consequence of the last inequality above we have $\rVert (v_{\lambda}-v_0)^{-}\rVert=0$, which implies that $v_{\lambda}\geq v_0$ in $\Omega$.

	To prove $b)$ let $\left\{\lambda_{n}\right\}\subset (0,\infty)$ such that $\lambda_{n}\to 0$ and denote $v_{\lambda_{n}}=v_{n}$. Let us show that $\left\{v_{n}\right\}$ is bounded. Indeed, since that $\left\{v_{n}\right\}\subset \mathcal{N}_{\lambda_{n}}$ we have
	\begin{equation}\label{I}
	\rVert v_{n}\rVert^{2}=\displaystyle \int_{\Omega} h(x)(g(v_{n}))^{-\gamma}g^{\prime}(v_{n})v_{n}+\lambda_{n}\displaystyle \int_{\Omega} b (g(v_{n}))^{p}g^{\prime}(v_{n})v_{n},
	\end{equation}
	and using the Lemma \ref{L1} $(4), (5), (10)$, the item $a)$ of this Lemma and Sobolev embedding we get the following inequality
	$$
	\begin{array}{rl}
	\rVert v_{n}\rVert^{2} & \leq \displaystyle \int_{\Omega} h(x)(g(v_{n}))^{1-\gamma}+\lambda_{n}\displaystyle \int_{\Omega} b(x)(g(v_{n}))^{p+1}\\
	& \leq \displaystyle \int_{\Omega} h(x) (g(v_{0}))^{1-\gamma}+\lambda_{n}\displaystyle \int_{\Omega} b(x) |v_{n}|^{p+1}\\
	& \leq \displaystyle \int_{\Omega} h(x) (g(v_0))^{1-\gamma}+\lambda_{n}C\displaystyle  \rVert v_{n}\rVert^{p+1},~~~~~~
	\end{array}
	$$
	and since that $0<p<1$ the last inequality implies that $\left\{v_{n}\right\}$ is bounded.	
	
	Now we may assume that there exists $0 \leq \psi \in H_{0}^{1}(\Omega)$ such that 
	\begin{equation}\label{convergencia}
	\left\{
	\begin{array}{l}
	v_{n}\rightharpoonup \psi ~\mbox{in}~ H_{0}^{1}(\Omega),\\
	v_{n} \to \psi ~\mbox{in}~ L^{s}(\Omega)~ \mbox{for all}~ s\in (0,2^{\ast}),\\
	v_{n}\to \psi ~ \mbox{a.s. in}~ \Omega. 
	\end{array}
	\right.
	\end{equation}
	
	Using the Fatou's lemma in (\ref{I}) we can proceed as in the proof of the Theorem \ref{T1} to show that $\psi>0$ in $\Omega$. This considerations implies that 
	$$h(x)(g(v_{n}))^{-\gamma}g^{\prime}(v_{n})(v_{n}-\psi)\to 0~\mbox{a.s.}~ \Omega,$$
	and from Lemma \ref{L1} $(4),(9), (10)$ and $v_{n}\geq v_0$ in $\Omega$ we have
	\begin{align*}
	\vert h(x)(g(v_{n}))^{-\gamma}g^{\prime}(v_{n})(v_{n}-\psi)\vert\leq h(x)(g(v_{n}))^{1-\gamma}+h(x)(g(v_{n}))^{-\gamma}g^{\prime}(v_{n})\psi\\
	\leq h(x)(g(v_0))^{1-\gamma}+h(x)(g(v_0))^{-\gamma}g^{\prime}(v_0)\psi~~~
	\end{align*}
	and $h(x)(g(v_0))^{1-\gamma}+h(x)(g(v_0))^{-\gamma}g^{\prime}(v_0)\psi\in L^{1}(\Omega)$, because $v_{0} $ is a solution from $D_{0}$. Hence by the Lebesgue Dominated Convergence Theorem we get
	\begin{equation}\label{II}
	\displaystyle \int_{\Omega} h(x)(g(v_{n}))^{-\gamma}g^{\prime}(v_{n})(v_{n}-\psi) \longrightarrow 0.
	\end{equation}

	As a consequence of (\ref{II}) and that $v_{n}$ is a solution from $D_{\lambda_{n}}$ we have
	$$
	\arraycolsep=1pt
        \medmuskip = 4mu
	\begin{array}{l}
	\displaystyle\lim_{n\to \infty}  (v_{n}, v_{n}-\psi)=\displaystyle\lim_{n\to \infty} \int_{\Omega}\nabla v_{n}\nabla (v_{n}-\psi)=\\
	 = \displaystyle\lim_{n\to \infty}\left[ \int_{\Omega} h(x)(g(v_{n}))^{-\gamma}g^{\prime}(v_{n})(v_{n}-\psi)+\displaystyle \lambda_{n}\int_{\Omega} b(x)(g(v_{n}))^{p}g^{\prime}(v_{n})(v_{n}-\psi)\right]\\
	 =  0,
	\end{array}
	$$
	and since that $v_{n}\rightharpoonup \psi$, follows that
	$$\lim_{n\to \infty}\rVert v_{n}-\psi \rVert^{2}=\lim_{n\to \infty}(v_{n},v_{n}-\psi)+\lim_{n\to \infty}(\psi,v_{n}-\psi)=0,$$	
	which implies that $v_{n}\longrightarrow \psi $ in $H_{0}^{1}(\Omega)	$ as $n\rightarrow \infty$.
	
	To finish the proof is suficient show that $\psi=v_{0}$. Indeed, note that we have the equation
	\begin{equation}\label{130}
	\displaystyle \int_{\Omega} \nabla v_{n}\nabla \varphi=\displaystyle \int_{\Omega} h(x)(g(v_{n}))^{-\gamma}g^{\prime}(v_{n})\varphi+\displaystyle \lambda_{n}\int_{\Omega} b(x)(g(v_{n}))^{p}g^{\prime}(v_{n})\varphi, 
	\end{equation}
	being satisfied for all $\varphi \in H_{0}^{1}(\Omega)$. So, by (\ref{convergencia}) and the Lemma \ref{L1} $(9),(10)$ we have that
	$$h(x)(g(v_{n}))^{-\gamma}g^{\prime}(v_{n})\varphi \longrightarrow h(x)(g(\psi))^{-\gamma}g^{\prime}(\psi)\varphi ~\mbox{a.s.}~\Omega,$$ 
	and 
	$$|h(x)(g(v_{n}))^{-\gamma}g^{\prime}(v_{n})\varphi|\leq h(x)(g(v_{0}))^{-\gamma}g^{\prime}(v_{0})|\varphi|\in L^{1}(\Omega),$$
	were we use that $v_{0}\leq v_{n}$ in $\Omega$ and $v_{0}$ is a solution from $D_{0}$.
	
	Therefore, using \eqref{130} we may apply the dominated convergence theorem to conclude that
	 $$\displaystyle \int_{\Omega} \nabla \psi\nabla \varphi= \int_{\Omega} h(x)(g(\psi))^{-\gamma}g^{\prime}(\psi)\varphi,$$
	for every $\varphi \in H_{0}^{1}(\Omega)$, that is $\psi$ is a solution from $D_{0}$. By uniqueness of solutions from $D_{0}$ we have $\psi=v_{0}$.
	
	Let us prove $c)$. To do this, note that $v_{\lambda}\geq v_{0}$ for all $\lambda>0$ and by the item $b)$ $v_{\lambda}\longrightarrow v_{0}$ in $H^{1}_{0}(\Omega)$. Thus we can proceed as in the proof of item $b)$ and apply the Lebesgue dominated convergence theorem for get that
	$$\displaystyle \lim_{\lambda \to 0}\Phi_{\lambda}(v_{\lambda})=\Phi_{0}(v_{0}).$$

	Finally we will prove $d)$. We will give the summary proof,  since it is very similar to proof of item $b)$. Let $\lambda \in [0,\infty)$ and consider $\left\{\lambda_{n}\right\}	\subset [0,\infty)$ such that $\lambda_{n}\longrightarrow \lambda $. Using (\ref{I}) follows that $\left\{v_{n}\right\}$ is bounded in $H^{1}_{0}(\Omega)$ and there exists $\psi \in H^{1}_{0}(\Omega)$ such that $\psi>0$, $v_{n}\rightharpoonup \psi$ in $H^{1}_{0}(\Omega)$ and $v_{n}\longrightarrow \psi$ in $L^{s}(\Omega)$ for all $s\in (0,2^{\ast})$. Since $v_{n}\geq v_{0}$ we may use the Lebesgue dominated convergence Theorem to conclude that
	$$\displaystyle \lim_{n\to \infty}(v_{\lambda_{n}},v_{\lambda_{n}}-\psi)= 0,$$
	which implies that $v_{\lambda_{n}}	\longrightarrow \psi$ in $H^{1}_{0}(\Omega)$. Again by Lebesgue dominated convergence theorem we get that $\psi $ is solution from $D_{\lambda}$ and by the Theorem \ref{T1} $b)$ we have $\psi=v_{\lambda}$ and consequently
	
	$$\displaystyle \lim_{n \to \infty}\Phi_{\lambda_{n}}(v_{\lambda_{n}})=\Phi_{\lambda}(v_{\lambda}).$$
	
	Therefore the function $[0,\infty )\ni \lambda \longmapsto \Phi_{\lambda}(v_{\lambda})$ is continuous. 
	
	To prove that the function $[0,\infty )\ni \lambda\longmapsto \Phi_{\lambda}(v_{\lambda})$ is decreasing consider $0\leq \lambda<\mu $. Then we have
	$$\Phi_{\lambda}(v_{\lambda})>\Phi_{\mu}(v_{\lambda})\geq \Phi_{\mu}(t_{\mu}(v_{\lambda})v_{\lambda})\geq \Phi_{\mu}(v_{\mu}) $$
	and the proof is complete.
	\end{proof}
By the Lemma \ref{AA} we have the following picture

\begin{figure}[h]
	\centering
	\begin{tikzpicture}[scale=.30]
	\draw[thick, ->] (-1, 2) -- (8, 2);
	\draw[thick, ->] (0, -2) -- (0, 7);
	\draw[thick] (0, 5) .. controls (1, 2) and (2, 0) ..(4,-1);
	\draw (8,2) node[below]{$\lambda$};
	\draw (0, 2) node[below left]{$0$};
	\draw (3,4) node[below]{$\Phi_{\lambda}(v_{\lambda})$};
	\draw (-1.9,6) node[below]{$\Phi_{0}(v_{0})$};
	\draw (3,-1.8) node[below]{\textrm{Fig. 3 }};
	\end{tikzpicture}
	\hspace{0.8cm}
\end{figure}

Now we will prove the Theorem \ref{A}.

\begin{proof} \textbf{Theorem \ref{A}}. Firstly we prove the item \textbf{a)}. Let $v_{\lambda}$ and $v_{0}$ as in the Lemma \ref{AA}. Then $u_{\lambda}=g(v_{\lambda})$, $u_{0}=g(v_{0})$ and $v_{\lambda}\geq v_{0}$ by Lemma \ref{AA} $a)$. So 
	$$u_{\lambda}=g(v_{\lambda})\geq g(v_{0})=u_{0},$$
	because the function $g(t)$ is increasing for $t\geq 0$ (see Lemma \ref{L1} $(9)$). 
	
	To prove \textbf{b)} note that $\nabla u_{\lambda} =g^{\prime}(v_{\lambda})\nabla v_{\lambda}$ for each $\lambda\geq0$ and by inequality $(x+y)^{2}\leq 2(x^{2}+y^{2})$ for $x,y\geq 0$ we get
	$$
	\begin{array}{rl}
	\displaystyle \int_{\Omega} \rvert \nabla u_{\lambda}-\nabla u_{0}\rvert^{2} & =\displaystyle \int_{\Omega} \rvert g^{\prime}(v_{\lambda})\nabla v_{\lambda}-g^{\prime}(v_{0})\nabla v_{0}\rvert^{2} \\
	& \leq \displaystyle \int_{\Omega} (g^{\prime}(v_{\lambda})\rvert \nabla v_{\lambda}-\nabla v_{0}\rvert +\rvert g^{\prime}(v_{\lambda})-g^{\prime}(v_{0})\rvert \rvert \nabla v_{0}\rvert )^{2}\\
	& \leq 2\displaystyle \int_{\Omega} (g^{\prime}(v_{\lambda}))^{2}\rvert \nabla v_{\lambda}-\nabla v_{0}\rvert^{2} +2\displaystyle \int_{\Omega} \rvert g^{\prime}(v_{\lambda})-g^{\prime}(v_{0})\rvert^{2} \rvert \nabla v_{0}\rvert^{2}\\
	& \leq 2\displaystyle \int_{\Omega} \rvert \nabla v_{\lambda}-\nabla v_{0}\rvert^{2} +2\displaystyle \int_{\Omega} \rvert g^{\prime}(v_{\lambda})-g^{\prime}(v_{0})\rvert^{2} \rvert \nabla v_{0}\rvert^{2},~~~~~~~~~~~
	\end{array}
	$$
	where we use that $g^{\prime}(t)\leq 1$ for all $t\geq 0$ (see Lemma \ref{L1}$(3)$). By Lemma \ref{AA} $b)$ we have $v_{\lambda}\longrightarrow v_{0}$ in $H_{0}^{1}(\Omega)$ as $\lambda \rightarrow 0$, therefore since $g^{\prime}(t)\leq 1$ for all $t\geq 0$ from the Lebesgue dominated convergence Theorem follows
	$$\int_{\Omega} \rvert g^{\prime}(v_{\lambda})-g^{\prime}(v_{0})\rvert^{2} \rvert \nabla v_{0}\rvert^{2}\longrightarrow 0,$$
	as $\lambda \longrightarrow 0$. This convergence together with the last inequality above implies that $u_{\lambda}\longrightarrow u_{0}$ in $H_{0}^{1}(\Omega)$ as $\lambda \rightarrow 0$.

\end{proof}

{\it\small Ricardo Lima Alves}\\{\it\small  Departamento de Matem\'atica}\\ {\it\small Universidade de Bras\'ilia }\\
{\it\small 70910-900 Bras\'ilia}\\{\it\small DF - Brasil}\\{\it\small e-mail: ricardoalveslima8@gmail.com}\vspace{1mm}\\{\it\small  Mariana Reis}\\{\it\small  Departamento de Matem\'atica}\\ {\it\small Universidade Federal da Integração Latino-Americana }\\{\it\small 85867-970 Foz do Iguaçu}\\{\it\small PR - Brasil}\\{\it\small e-mail: mariana.reis@unila.edu.br}\vspace{1mm}

\end{document}